\newtheorem{theorem}{Theorem}
\newtheorem{lemma}{Lemma}
\newtheorem{corollary}{Corollary}
\newtheorem{proposition}{Proposition}
\theoremstyle{definition}
\newtheorem{remark}{Remark}
\newcommand{\IE}{\mathbb{E}}
\newcommand{\IN}{\mathbb{N}}
\newcommand{\IR}{\mathbb{R}}
\newcommand{\ZZ}{\mathbb{Z}}
\newcommand{\bsone}{\boldsymbol{1}}
\newcommand{\pr}[1]{\mathbb{P}\,[#1]}
\newcommand{\gs}{\succcurlyeq}
\newcommand{\ls}{\preccurlyeq}
\DeclareMathOperator{\vol}{vol}
\DeclareMathOperator{\diam}{diam}
\DeclareMathOperator{\dist}{dist}
\newcommand{\leqnomode}{\tagsleft@true}
\newcommand{\reqnomode}{\tagsleft@false}
\title{Random points are optimal\\ 
for the approximation of Sobolev functions} 
\author{David Krieg\footnote{Institut f\"ur Analysis, 
Johannes Kepler Universit\"at Linz, 4040 Linz, Austria.
\texttt{david.krieg@jku.at}, \texttt{mathias.sonnleitner@jku.at}.
} \
and Mathias Sonnleitner$^*$}
\date{}
\begin{document}

\maketitle

\begin{abstract}
We show that independent and uniformly distributed sampling points 
are asymptotically as good 
as optimal sampling points for the approximation of functions
from 
Sobolev spaces $W_p^s(\Omega)$ on bounded convex domains $\Omega\subset \IR^d$ in the $L_q$-norm if $q<p$.
More generally, we characterize the quality of arbitrary sampling point sets $P\subset \Omega$
via the $L_\gamma(\Omega)$-norm of the distance function $\dist(\cdot,P)$,
where $\gamma=s(1/q-1/p)^{-1}$ if $q<p$ and $\gamma=\infty$ if $q\ge p$.
This improves upon previous characterizations based on the covering radius of $P$.
\end{abstract}
\medskip

\centerline{\begin{minipage}[hc]{130mm}{
{\em Keywords:} 
sampling, 
rate of convergence, numerical integration, random information, interior cone condition \\
{\em MSC 2020:}
41A25,   	
41A63,   	
62D05,    
65D15,   	
65D30   	
}
\end{minipage}}
\vspace{1cm}

Let $\Omega\subset \IR^d$ be a bounded convex domain. We study the problem of approximating
a function $f$ from the Sobolev space $W_p^s(\Omega)$
in the $L_q(\Omega)$-norm based on function values $f(x_j)$ 
on a finite set of sampling points $P=\{x_1,\hdots,x_n\}$.
This makes sense if $s > d/p$, 
in which case $W_p^s(\Omega)$ is compactly embedded
into the space of continuous functions $ C(\Omega)$.
The minimal worst-case error that can be achieved with 
the given sampling points 
is the number
\begin{equation} \label{eq:wce-app}
 e\big(P,W_p^s(\Omega)\hookrightarrow L_q(\Omega)\big) :=\,
 \inf_{S_P}\, \sup_{ \Vert f \Vert_{W_p^s(\Omega)} \le 1 } \Vert f - S_P(f) \Vert_{L_q(\Omega)},
\end{equation}
where the infimum is taken over all sampling operators of the form
\begin{equation}\label{eq:sampling_operator}
 S_P\colon W_p^s(\Omega) \to L_q(\Omega), 
 \qquad S_P(f)=\varphi\big(f(x_1),\hdots,f(x_n)\big).
\end{equation}
In general, we admit arbitrary mappings $\varphi\colon \IR^n \to L_q(\Omega)$, 
but sometimes it may be preferable to allow only linear mappings $\varphi$,
in which case we write $e^{\rm lin}$ instead of $e$.
We also study the related problem
of numerical integration on $\Omega$. To be precise, we consider
\begin{equation}\label{eq:wce-integration}
e\big(P,{\rm INT},W^s_p(\Omega)\big):=\inf_{S_P} \sup_{\|f\|_{W^s_p(\Omega)}\le 1} \left| \int_{\Omega} f(x){\rm d}x - S_P(f)\right|,
\end{equation}
where the infimum is now taken over all sampling operators of the form 
\begin{equation}\label{eq:sampling_operator_integration}
 S_P\colon W_p^s(\Omega) \to \mathbb{R},
 \qquad S_P(f)=\varphi\big(f(x_1),\hdots,f(x_n)\big).
\end{equation}
In this case, it does not matter whether we allow arbitrary
or only linear mappings $\varphi\colon\mathbb{R}^n\to\mathbb{R}$
since the infimum $e\big(P,{\rm INT},W^s_p(\Omega)\big)$ will be the same.
This is a classical result due to Smolyak and Bakhvalov,
see e.g.\  Theorem 4.7 in \cite{NW08}.

\medskip

There is a vast literature on the error of optimal sampling points. 
For example, it is known that the rate of convergence of this error is
\[
 e\big(n,W_p^s(\Omega)\hookrightarrow L_q(\Omega)\big) 
 := \inf_{\vert P \vert \le n} e\big(P,W_p^s(\Omega)\hookrightarrow L_q(\Omega)\big) 
 \,\asymp\, n^{-s/d+(1/p-1/q)_+}
\]
for the approximation problem and
\[
 e\big(n,{\rm INT},W^s_p(\Omega)\big) 
 := \inf_{\vert P \vert \le n} e\big(P,{\rm INT},W^s_p(\Omega)\big) 
 \,\asymp\, n^{-s/d}
\]
for the integration problem, where $a_+:=\max\{a,0\}$, $a\in\mathbb{R}$, 
and the infimum runs over all point sets $P\subset \Omega$ with at most $n$ points.
The same holds for the error $e^{\rm lin}$
of linear algorithms.
These are classical results for special domains like the cube,
see e.g.\ \cite[Chapter~3]{Cia78} and \cite[Chapter~6]{Hei94}. For general domains, we refer to Narcowich, Wendland and Ward \cite{NWW04} as well as Novak and Triebel \cite{NT06}.

In this paper, however, we are not so much interested 
in optimal sampling points. Although the question is quite interesting,
we feel that there are many applications where it is unrealistic
to assume that we can choose the sampling points at our convenience.
It might rather be realistic to assume that the 
points are independent random vectors
which are uniformly distributed on the domain.
That is, we get our data $f(x_j)$ for random parameters $x_j\in \Omega$
which are not under our control. This is also a typical assumption in learning theory and uncertainty quantification.  But is this a major drawback? Are random points significantly worse than optimal points?

\medskip

It can be understood from \cite{NWW04,NT06}
that the optimal order of convergence may be 
achieved with any set of sampling points that cover the domain well enough, meaning that the volume of the largest 
empty ball amidst the point set is of order $1/n$.
In other words, the covering radius,
which is the supremum of the distance function
\begin{equation*}
\dist(\cdot, P)\colon \mathbb{R}^d\to [0,\infty), 
\qquad \dist(x,P):=\min_{y\in P}\|x-y\|_2
\end{equation*}
over $x\in \Omega$, i.e., the radius of the largest empty ball,
has to be of order $n^{-1/d}$.
In this case, 
an optimal algorithm is given by moving least squares, see e.g.\  Wendland \cite{W04}.
In fact, it is quite common to use the covering radius, which is also called mesh norm, to bound errors of sampling-based algorithms,
see also~\cite{ALdST07, BDS+15, Duc78,EGO19,Mha10,WS93} for example.
Unfortunately, random point sets do not have optimal covering properties since the volume of the largest empty ball is of order $\log(n)/n$.
On the other hand, \emph{most} empty balls are of order $1/n$
and one might think that a few larger gaps do not matter
if the error is measured in the $L_q$-norm for sufficiently small $q$.
We use a local version of Wendland's result to 
show that this is indeed true for any $q<p$.
We obtain the following characterization
of the error of $L_q$-approximation and numerical integration. 

\leqnomode
\begin{theorem}\label{thm:main-result-sobolev}
	Let $\Omega\subset \mathbb{R}^d$ be a bounded convex domain,
	$1\le p,q \le \infty$ and $s\in\mathbb{N}$ with $s>d/p$. 
	Then we have for any nonempty and finite point set $P\subset \Omega$ the equivalences
	\begin{align}
	\tag{\textit a}
	&e^\ast\big(P,W^s_p(\Omega)\hookrightarrow L_q(\Omega)\big)\, \asymp \,
	\big\|\dist(\cdot, P)\big\|_{L_{\gamma}(\Omega)}^\alpha\\[9pt]
\tag{\textit b}
	 &e\big(P,{\rm INT},W^s_p(\Omega)\big)\, \asymp\, e^\ast\big(P,W^s_p(\Omega)\hookrightarrow L_1(\Omega)\big),
	\end{align} 
	where $e^\ast\in\{e, e^{\rm lin}\}$, the implied constants are independent of $P$, and
\[
 \gamma = \left\{ \begin{array}{ll} s(1/q-1/p)^{-1},  & q<p, \\ \infty, &  q\ge p, \end{array} \right.
 \qquad
 \alpha =  \left\{ \begin{array}{ll} s,  & q<p, \\ s-d(1/p-1/q), &  q\ge p. \end{array} \right.
\]
\end{theorem}
\reqnomode

\medskip

 This shows that the covering radius is not the right quantity
 to characterize the error in the case $q<p$,
 and in particular, for numerical integration.
 In general, the assumption 
 of a small covering radius is unnecessarily strong.
 Instead, we find that a sequence of point sets is asymptotically optimal
 if and only if the $\gamma$-norm of the distance function
 decays with the optimal rate $n^{-1/d}$.

\begin{corollary}\label{cor:characterization-sobolev}
	Let $\Omega\subset \mathbb{R}^d$ be a bounded convex domain,
   $1\le p,q \le \infty$ and $s\in\mathbb{N}$ with $s>d/p$.
	Assume that for each $ n\in\mathbb{N}$ an $ n $-point set $ P_n\subset \Omega $ is given. 
	These point sets are asymptotically optimal, i.e.,
	\[
	e^\ast\big(n,W^s_p(\Omega)\hookrightarrow L_q(\Omega)\big)
	\,\asymp \,e^\ast\big(P_n,W^s_p(\Omega)\hookrightarrow L_q(\Omega)\big),
	\]
	if and only if 
	\[
	\|\dist(\cdot, P_n)\|_{L_\gamma(\Omega)} \,\ls\, n^{-1/d},
	\]
	with $\gamma$ as in Theorem~\ref{thm:main-result-sobolev},
	$e^\ast\in\{e, e^{\rm lin}\}$
	and implied constants independent of $ n $.
\end{corollary}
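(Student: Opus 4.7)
The plan is to derive the corollary directly from Theorem~\ref{thm:main-result-sobolev} combined with the known optimal rate recalled in the introduction, namely
$e^\ast\big(n, W^s_p(\Omega) \hookrightarrow L_q(\Omega)\big) \asymp n^{-s/d + (1/p - 1/q)_+}$. Since Theorem~\ref{thm:main-result-sobolev} expresses the error of any point set $P$ as a power of the single quantity $\|\dist(\cdot,P)\|_{L_\gamma(\Omega)}$, the asymptotic optimality of a sequence $(P_n)$ will reduce to a condition on this one quantity alone.

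My first step is to read off from Theorem~\ref{thm:main-result-sobolev} the exponent $\alpha$ with which $\|\dist(\cdot,P)\|_{L_\gamma(\Omega)}$ enters the error: $\alpha = s - d(1/p-1/q)$ when $q \ge p$ (with $\gamma = \infty$), and $\alpha = s$ when $q < p$. In both cases $\alpha > 0$, where for $q\ge p$ we use that $s > d/p \ge d(1/p - 1/q)$. Next, a short algebraic check shows that the optimal rate agrees with $n^{-\alpha/d}$ in both cases: for $q \ge p$ one has $\alpha/d = s/d - (1/p - 1/q)$, and for $q < p$ the positive part vanishes and $\alpha/d = s/d$.

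Finally, since $e^\ast(n,\cdot) \le e^\ast(P_n,\cdot)$ holds trivially from the definition of the infimum over $n$-point sets, the equivalence $e^\ast(P_n,\cdot) \asymp e^\ast(n,\cdot)$ is tantamount to the one-sided bound $e^\ast(P_n,\cdot) \lesssim n^{-\alpha/d}$. Substituting Theorem~\ref{thm:main-result-sobolev} and taking $\alpha$-th roots converts this into the desired condition
$\|\dist(\cdot,P_n)\|_{L_\gamma(\Omega)} \lesssim n^{-1/d}$,
uniformly in $n$, and the same chain of equivalences is reversible.

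There is no real obstacle here, since Theorem~\ref{thm:main-result-sobolev} has already carried out the substantive work of relating the error to a norm of the distance function. The only mild care needed is to match the exponents $\alpha$ correctly across the two regimes $q \ge p$ and $q < p$, and to note that the statement $e^\ast(P_n,\cdot) \asymp e^\ast(n,\cdot)$ is genuinely one-sided, so the asserted characterization is an upper bound $\lesssim n^{-1/d}$ rather than a two-sided asymptotic.
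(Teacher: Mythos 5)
Your proposal is correct and follows the same route the paper intends for this corollary (the paper offers no separate proof, treating it as an immediate consequence of Theorem~\ref{thm:main-result-sobolev} together with \eqref{eq:averagecoveringlower} and the classical rate). The key observations you make---that $e^\ast(n,\cdot) \le e^\ast(P_n,\cdot)$ holds trivially so that ``$\asymp$'' reduces to the upper bound, and that the exponent $\alpha$ from Theorem~\ref{thm:main-result-sobolev} satisfies $\alpha/d = s/d - (1/p-1/q)_+$ in both regimes---are exactly the right ones.
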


 We emphasize that the main contribution of this paper is the characterization in the case $q< p$. The case $q\ge p$ is included for completeness. The results for $q<p$ seem to be novel already for $d=1$.
 To the authors, similar results only have been known
 for the spaces $W_\infty^s(\Omega)$ with $s\le 2$, 
 see Sukharev \cite{S79} and Pag\`es \cite{P98}. 

 \medskip
 
 By Theorem~\ref{thm:main-result-sobolev}\,(b), we clearly get the same characterization for numerical integration 
as for the problem of $L_1$-approximation.
In a certain sense, this characterization may serve as an asymptotic (and weighted) analogue 
in (isotropic) Sobolev spaces to the connection between errors of quasi-Monte Carlo rules in several other function spaces 
and various types of the geometric notion of discrepancy 
as surveyed for example in \cite[Section~9]{NW10}.

\begin{remark}[More general domains]\label{rem:domains}
In fact, the proof of Theorem~\ref{thm:main-result-sobolev} for $q\ge p$ works 
for all domains that satisfy 
an interior cone condition (as considered in \cite{NWW04}) and which admit a bounded linear extension operator 
${\rm ext}\colon W^s_p(\Omega) \to W^s_p(\IR^d)$ with
${\rm ext}(f)\vert_\Omega = f$ for all $f\in W^s_p(\Omega)$.
This includes all bounded Lipschitz domains (as considered in \cite{NT06}), 
see Lemmas~\ref{lem:Lip-implies-CC} and \ref{lem:extension-sobolev}.
Our proof for $q<p$ additionally requires
a strong local interior cone condition as described in Lemma~\ref{lem:local-CC},
which is implied by the assumed convexity of the domain.
However, we conjecture that also the case $q<p$
may be extended (at least) to all bounded Lipschitz domains.
\end{remark}

\begin{remark}[More general spaces]\label{rem:spaces}
Theorem \ref{thm:main-result-sobolev}
 may also be extended to more general function spaces,
 including Sobolev-Slobodeckij spaces of fractional smoothness,
Triebel-Lizorkin spaces, 
H\"older-Zygmund spaces and Bessel potential spaces,
see Section~\ref{sec:extensions}.
\end{remark}

\begin{remark}[Sobolev bounds on functions with scattered zeros]
Theorem~\ref{thm:main-result-sobolev} also gives the following bound: 
If $f\in W_p^s(\Omega)$ satisfies $f\vert_P=0$, then
\[
\Vert f \Vert_{L_q(\Omega)} \,\le\, c \,\big\|\dist(\cdot, P)\big\|_{L_{\gamma}(\Omega)}^\alpha \, \Vert f \Vert_{W_p^s(\Omega)},
\]
where $\gamma=\gamma(s,p,q)$ and $\alpha=\alpha(s,p,q,d)$ are as in Theorem~1
and $c$ is a constant independent of $f$ and $P$.
This improves upon the corresponding result in \cite[Theorem~1.1]{NWW04} for $q<p$
in the sense that the covering radius is replaced by the $\gamma$-norm of the distance function.
The paper \cite{NWW04} also gives upper bounds for
the Sobolev norms $\Vert f \Vert_{W_q^r(\Omega)}$ with $r<s-d/p$.
It would be interesting to see whether one can perform the same replacement in these bounds.
\end{remark}

\begin{remark}[Quantization]
 The quantity $\|\dist(\cdot,P)\|_{L_{\gamma}(\Omega)}^{\gamma}$ is studied extensively in the theory of quantization of measures, where it is known as quantization error or distortion. Asymptotics for the infimum over all $n$-point sets are known and the characterization in Corollary~\ref{cor:characterization-sobolev} is similar to the concept of asymptotically optimal quantizers. For more information we refer to Graf and Luschgy~\cite{GL00}.  	
\end{remark}
 
Our result enables us to study the quality of random sampling points which have almost optimal covering properties. In fact, we have the following proposition.
 
\begin{proposition}\label{pro:randomcovering}
Let $X_1,X_2,\ldots$ be independent and uniformly distributed random vectors on a bounded convex domain $\Omega\subset \mathbb{R}^d$ and let $0<\alpha<\infty$. Consider the random $n$-point set $P_n=\{X_1,\ldots,X_n\}$. Then
\[
\mathbb{E}\,\|\dist(\cdot,P_n)\|_{L_{\gamma}(\Omega)}^{\alpha}\,\asymp\, 
\begin{cases}
n^{-\alpha/d} & \text{if } 0<\gamma<\infty,\\
n^{-\alpha/d}(\log n)^{\alpha/d} & \text{if } \gamma=\infty. 
\end{cases}
\]
\end{proposition}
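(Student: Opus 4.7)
The plan is to treat $\gamma<\infty$ and $\gamma=\infty$ separately, and in both cases reduce everything to controlling the probability that specific balls are empty of sample points. The common pointwise input is
\[
\mathbb{P}\bigl[\dist(x,P_n)>r\bigr]=\Bigl(1-\tfrac{\vol(\Omega\cap B(x,r))}{\vol(\Omega)}\Bigr)^n,
\]
together with the uniform volume estimate $\vol(\Omega\cap B(x,r))\gs r^d$ valid for every $x\in\Omega$ and every $r$ below some constant $r_0>0$. This estimate is a consequence of convexity and is essentially the uniform interior cone condition alluded to in Remark~\ref{rem:domains}; it is the engine behind every step that follows.

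For $\gamma<\infty$ the lower bound is deterministic: as noted after Corollary~\ref{cor:characterization-sobolev}, every $n$-point set $P\subset\Omega$ satisfies $\|\dist(\cdot,P)\|_{L_\gamma(\Omega)}\gs n^{-1/d}$, which immediately yields $\mathbb{E}\|\dist(\cdot,P_n)\|_{L_\gamma(\Omega)}^\alpha\gs n^{-\alpha/d}$ without any probabilistic argument. For the matching upper bound, Fubini and the layer-cake formula give
\[
\mathbb{E}\|\dist(\cdot,P_n)\|_{L_\gamma(\Omega)}^\gamma=\int_\Omega\int_0^{\diam(\Omega)}\gamma r^{\gamma-1}\Bigl(1-\tfrac{\vol(\Omega\cap B(x,r))}{\vol(\Omega)}\Bigr)^n\,dr\,dx.
\]
Splitting the inner integral at $r_0$, the substitution $u=nr^d$ bounds the piece $r\le r_0$ by a constant multiple of $n^{-\gamma/d}$, while the piece $r\ge r_0$ is exponentially small in $n$. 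To pass from the $\gamma$-th moment to an arbitrary $\alpha$, set $\gamma':=\max(\alpha,\gamma)<\infty$, use the bounded-domain embedding $\|\cdot\|_{L_\gamma(\Omega)}\ls\|\cdot\|_{L_{\gamma'}(\Omega)}$, and apply Jensen to $t\mapsto t^{\alpha/\gamma'}$ (concave since $\alpha/\gamma'\le 1$) to the random variable $\|\dist(\cdot,P_n)\|_{L_{\gamma'}(\Omega)}^{\gamma'}$.

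For $\gamma=\infty$, write $R_n:=\|\dist(\cdot,P_n)\|_{L_\infty(\Omega)}$. The upper bound is a classical net argument: pick an $(r/2)$-net $\{y_1,\ldots,y_N\}\subset\Omega$ with $N\asymp r^{-d}$, so $R_n>r$ forces some $B(y_i,r/2)\cap\Omega$ to be empty, and the union bound gives $\mathbb{P}[R_n>r]\ls r^{-d}(1-cr^d)^n$. Taking $r=C(\log n/n)^{1/d}$ with $C$ large makes this $\ls n^{-K}$ for any prescribed $K$, and the distribution-function formula $\mathbb{E}[R_n^\alpha]=\int_0^{\diam(\Omega)}\alpha t^{\alpha-1}\mathbb{P}[R_n>t]\,dt$, split at $t=r$, yields $\mathbb{E}[R_n^\alpha]\ls(\log n/n)^{\alpha/d}$. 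For the matching lower bound, pack into the inner parallel body $\{x\in\Omega:\dist(x,\partial\Omega)\ge r\}$ (which has volume $\asymp\vol(\Omega)$ for large $n$) a family of $N\asymp n/\log n$ pairwise disjoint balls $B(y_i,r)$ with $r=c(\log n/n)^{1/d}$ and $c$ chosen so that $\kappa:=n\vol(B(y_i,r))/(\vol(\Omega)\log n)<1$. Writing $v=\vol(B(y_i,r))/\vol(\Omega)$, the events $E_i:=\{B(y_i,r)\cap P_n=\emptyset\}$ satisfy $\mathbb{P}[E_i]=(1-v)^n\gs n^{-\kappa}$, and disjointness yields $\mathbb{P}[E_i\cap E_j]=(1-2v)^n\le(1-v)^{2n}=\mathbb{P}[E_i]\mathbb{P}[E_j]$, i.e., pairwise negative correlation. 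Hence $M:=\sum_i\mathbf{1}_{E_i}$ has $\mathrm{Var}(M)\le\mathbb{E}[M]$, and Chebyshev together with $\mathbb{E}[M]\gs n^{1-\kappa}/\log n\to\infty$ gives $\mathbb{P}[R_n\ge r]\ge\mathbb{P}[M\ge 1]\to 1$; consequently $\mathbb{E}[R_n^\alpha]\gs r^\alpha\gs(\log n/n)^{\alpha/d}$.

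The main technical obstacle I anticipate is uniformity of the volume estimate $\vol(\Omega\cap B(x,r))\gs r^d$ up to the boundary, which underlies both the Fubini computation and the packing in the lower bound; convexity delivers it cleanly. The only delicate parameter is the constant $c$ in the $\gamma=\infty$ lower bound, which must be small enough to keep $\kappa<1$ so that the second-moment argument for $M$ does not degenerate.
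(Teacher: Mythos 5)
Your proof is correct but follows a genuinely different route from the paper. The paper handles $\gamma<\infty$ by invoking a limit theorem of Cohort (stated as Proposition~\ref{pro:limittheorem}), which gives convergence of $n^{\gamma/d}\vol(\Omega)^{-1}\int_\Omega\dist(x,P_n)^\gamma\,\mathrm{d}x$ to an explicit constant almost surely and in $L_p$; taking $p=\alpha/\gamma$ then settles the two-sided bound for the $\alpha$-th moment. For $\gamma=\infty$ the paper cites Reznikov--Saff and the coupon-collector heuristic. You instead give a self-contained argument from scratch: the deterministic lower bound $\|\dist(\cdot,P)\|_{L_\gamma}\gs n^{-1/d}$ (from \eqref{eq:averagecoveringlower}) covers the $\gamma<\infty$ lower bound; Fubini plus the layer-cake identity and the uniform volume estimate $\vol(\Omega\cap B(x,r))\gs r^d$ (from the cone condition, Lemma~\ref{lem:convex-cone}) yields the $\gamma$-th moment upper bound directly, and the embedding into $L_{\gamma'}$ with $\gamma'=\max(\alpha,\gamma)$ together with Jensen transfers this to the $\alpha$-th moment; for $\gamma=\infty$ a net/union bound gives the upper bound, and a second-moment argument with pairwise negatively correlated empty-ball events (using disjointness) gives the matching lower bound with the log factor. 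Your approach has the advantage of being elementary and not outsourcing the key estimate to \cite{Coh04} or \cite{RS16}; what it does not reproduce is the precise limiting constant $(\vol(\Omega)/\vol(B(0,1)))^{\gamma/d}\Gamma(1+\gamma/d)$ and the almost-sure convergence recorded in Proposition~\ref{pro:limittheorem}, which the paper wants for later remarks. All the individual steps check out: the volume estimate up to the boundary follows from convexity exactly as you say, the substitution $u=nr^d$ and the exponential tail beyond $r_0$ are fine, Jensen applies because $\alpha/\gamma'\le1$, the negative-correlation inequality $(1-2v)^n\le(1-v)^{2n}$ is elementary, and the choice $\kappa<1$ keeps $\mathbb{E}[M]\to\infty$ so Chebyshev closes the lower bound.
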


This immediately yields the following result on the quality of random sampling points.

\begin{corollary}\label{thm:intro}
Let $X_1,X_2,\ldots$ be independent and uniformly distributed on a bounded convex domain $\Omega\subset \mathbb{R}^d$, $1\le p,q \le \infty$ and $s\in\mathbb{N}$ with $s>d/p$. 
Consider the random $n$-point set $P_n=\{X_1,\ldots,X_n\}$. Then
\[
 \IE\, e^\ast\big(P_n,W_p^s(\Omega)\hookrightarrow L_q(\Omega)\big) 
 \,\asymp\, \begin{cases}
 	e^\ast\big(n/\log n,W^s_p(\Omega)\hookrightarrow L_q(\Omega)\big) & \text{if } q\ge p,\\
	e^\ast\big(n,W^s_p(\Omega)\hookrightarrow L_q(\Omega)\big) & \text{if } q< p.\vphantom{\Big|}
	\end{cases}
\]
\end{corollary}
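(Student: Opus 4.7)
The plan is to deduce the corollary directly from Theorem~\ref{thm:main-result-sobolev}(a) and Proposition~\ref{pro:randomcovering}. The key observation is that Theorem~\ref{thm:main-result-sobolev}(a) is a \emph{pointwise} (in the randomness) equivalence
\[
 e^\ast\bigl(P_n,W_p^s(\Omega)\hookrightarrow L_q(\Omega)\bigr) \,\asymp\, \|\dist(\cdot,P_n)\|_{L_\gamma(\Omega)}^{\alpha},
\]
with implied constants independent of $P_n$, where $(\gamma,\alpha)=(s(1/q-1/p)^{-1},\,s)$ for $q<p$ and $(\gamma,\alpha)=(\infty,\,s-d(1/p-1/q))$ for $q\ge p$. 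Taking expectations preserves this two-sided bound, and the right-hand side is exactly what Proposition~\ref{pro:randomcovering} evaluates.

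In the case $q<p$, the parameter $\gamma$ is finite, so Proposition~\ref{pro:randomcovering} with $\alpha=s$ gives $\mathbb{E}\|\dist(\cdot,P_n)\|_{L_\gamma(\Omega)}^{s}\asymp n^{-s/d}$. Since $(1/p-1/q)_+=0$, this matches the optimal rate $e^\ast(n,W_p^s(\Omega)\hookrightarrow L_q(\Omega))\asymp n^{-s/d}$ recalled in the introduction.

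In the case $q\ge p$, one has $\gamma=\infty$ and $\alpha=s-d(1/p-1/q)$, which is positive since $s>d/p\ge d/p-d/q$. Proposition~\ref{pro:randomcovering} then yields $\mathbb{E}\|\dist(\cdot,P_n)\|_{L_\infty(\Omega)}^{\alpha}\asymp (n/\log n)^{-\alpha/d}$. An elementary computation $-\alpha/d=-s/d+(1/p-1/q)$ identifies this with $e^\ast(n/\log n,W_p^s(\Omega)\hookrightarrow L_q(\Omega))\asymp (n/\log n)^{-s/d+(1/p-1/q)}$.

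There is no real obstacle: the two ingredients fit together immediately, and the only work is to check that the exponents produced by Proposition~\ref{pro:randomcovering} line up with the known rate in each case. The substantive content of the corollary is already packaged in Theorem~\ref{thm:main-result-sobolev} and Proposition~\ref{pro:randomcovering}.
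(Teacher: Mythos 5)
Your proposal is correct and follows exactly the route the paper intends: take expectations in the pointwise (in $P_n$) equivalence of Theorem~\ref{thm:main-result-sobolev}(a), then evaluate the resulting moment via Proposition~\ref{pro:randomcovering} and match exponents with the known optimal rate $n^{-s/d+(1/p-1/q)_+}$. The paper treats this as immediate and does not spell out the exponent bookkeeping; you have simply made that explicit.
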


This means that, in expectation, random points are asymptotically optimal
for $L_q$-approximation on $W^s_p(\Omega)$
whenever $q<p$.
In particular, random points are optimal for 
integration on $W^s_p(\Omega)$
for all $p>1$.
For an almost sure result, see Corollary~\ref{cor:as}. 
\medskip

All the above upper bounds 
 are achieved by a linear algorithm $S_P\colon W_p^s(\Omega)\to L_q(\Omega)$ 
 which only depends on the domain $\Omega$, the point set $P\subset \Omega$,
 and a smoothness threshold $m\in\IN$.
 It satisfies the bounds from Theorem~\ref{thm:main-result-sobolev} for all $1\le p,q \le \infty$ as well as all $s\in\IN$ with $s\le m$ and can be outlined as follows. 
 
 \medskip

\noindent\textbf{Algorithm.} Let $\Omega$ as in Theorem~\ref{thm:main-result-sobolev} and $m \in \IN$. 
\begin{itemize}
\item[a)] Given a finite and nonempty point set $P\subset \Omega$ construct a covering of $\Omega$ by good cubes $Q_1,\hdots,Q_N$ according to Proposition~\ref{prop:the-covering}.
\item[b)] Given the data $\big(x,f(x)\big)_{x\in P}$ for some unknown $f\in W_p^s(\Omega)$ and some $y\in \Omega\setminus P$, select an index $1\le i\le N$ with $y\in Q_i$ and compute $S_Pf(y)$ using the moving least squares method from \cite[Theorem~4.7]{W04} for $\overline{\Omega\cap Q_i}$, $P\cap Q_i$ and $m$.
\end{itemize}

In this paper, we do not discuss the implementation and the computational cost of these two steps.
Details on the second step may be found in~\cite{W04}.
In the paper \cite{KNS}, we propose a variant of this algorithm
where the computation of $S_Pf(x)$ at a certain point of interest $x\in \Omega$
does not require to compute the hole covering from the first step. 
We refer to Section~\ref{subsec:hilbert} for a more detailed description of optimal algorithms in the Hilbert space case via a reproducing kernel.

\medskip

For $q\ge p$, random points are not optimal.
We note, however, that we only lose a logarithmic factor
and that random information is still almost optimal. For the cube,
this was already observed in \cite{HKNPUsurvey}.
There, in Section~2.3, the case $q<p$ was stated as an open problem, which is resolved by Corollary~\ref{thm:intro}.
 Further recent results on the general question of the quality of random
information may be found in \cite{HKNPUellipsoids, HKNV20, KU19, Ull20}.
We point to the fact that there are also situations
where random information is much worse than
optimal information, see \cite[Section~6]{HKNV20} for an example.
\medskip

The remainder of the paper is organized as follows.
In Section~\ref{sec:preliminaries} we introduce notation
and present some basic facts on domains, polynomial reproduction
and Sobolev spaces.
Section~\ref{sec:proof} is dedicated to our geometric characterization
of the error and the proof of Theorem~\ref{thm:main-result-sobolev}.
We also discuss algorithms for numerical integration
and give a first example to illustrate the benefits of our characterization.
In Section~\ref{sec:random}, we apply the result to random sampling points 
for which a limit theorem is derived.
Finally, in Section~\ref{sec:extensions}, we discuss how our 
results may be extended
to more general function spaces.

\medskip

We want to note that there remain several unresolved issues. 
In particular, we encourage readers to 
(a)~examine the $d$-dependence of the constants in Theorem~~\ref{thm:main-result-sobolev}, 
(b)~examine and improve upon the computational cost of our algorithm, 
(c)~derive similar bounds for existing approximation algorithms, 
which currently are stated in terms of the covering radius,
(d)~obtain similar results for Besov spaces, and 
(e)~study more general domains.
Note that, while this paper was under revision,
we already extended our results to compact Riemannian manifolds in \cite{KSmani}.

\section{Preliminaries}
\label{sec:preliminaries}

Let us first fix some basic notation.
For $d\in\mathbb{N}$ and $0<p\leq \infty$, we write 
\[
\|x\|_p:=
\begin{cases}
\big(|x_1|^p+\cdots+|x_d|^p\big)^{1/p},& \text{if }0<p<\infty,\\
\max_{1\leq i\leq d}|x_i|,& \text{if }p=\infty
\end{cases}
\]
for the $\ell^p$-(quasi-)norm of  
$x=(x_1,\ldots,x_d)\in\mathbb{R}^d$. The space $\mathbb{R}^d$ will be equipped with the standard inner product $\langle \cdot,\cdot\rangle$.
We write $B_d^p(x,r):=\{y\in\mathbb{R}^d: \|x-y\|_p<r\}$ for 
the $\ell^p$-ball of radius $r>0$ centered at $x$. 
If $ c>0 $, we write $ cB_d^p(x,r) $ for the concentric $ \ell^p $-ball $ B_d^p(x,cr) $.
If $p=2$, we often omit the $p$ in these notations. 
We write $\mathbb{S}^{d-1}$ for the unit sphere, which is the 
boundary of $B_d^2(0,1)$. 
We denote the $ d $-dimensional Lebesgue-measure by $ \vol_d$ and frequently omit the dimension $ d $.
\medskip

We assume that all random vectors are defined on a common probability space $ (S,\Sigma,\mathbb{P}) $.
For a set $\Omega\subset \IR^d$ with finite and positive volume, 
an $\IR^d$-valued random variable $X$ will be called a uniformly distributed point in $\Omega$ 
if $\pr{X\in A}=\vol(A\cap \Omega)/\vol(\Omega)$ for all 
Lebesgue-measurable $A\subset\mathbb{R}^d$.
\medskip

The space of all continuous functions $f\colon\Omega\to\mathbb{R}$ on an open set 
$\Omega\subset\mathbb{R}^d$ is denoted by $C(\Omega)$. 
We write $f|_{B}$ for the restriction of $f\colon A\to\IR$ to $B\subset A\subset\mathbb{R}^d$. 
The space of polynomials on $\mathbb{R}^d$ 
of degree at most $m\in \mathbb{N}$ is written $\mathcal{P}_m^d$.
 If $0<p<\infty$, the function space $L_p(\Omega)$ is the collection of all 
(equivalence classes of) Lebesgue-measurable functions $f\colon\Omega\to\mathbb{R}$ 
with finite (quasi-)norm $\|f\|_{L_p(\Omega)}:=\big(\int_{\Omega}|f(x)|^p{\rm d}x\big)^{1/p}$. 
The space $L_{\infty}(\Omega)$ is the space of all essentially bounded functions $f\colon\Omega\to\mathbb{R}$
equipped with the norm $\|f\|_{L_{\infty}(\Omega)}:={\rm {esssup}}_{x\in \Omega}|f(x)|$. 
We use the convention that $a/0=\infty$ and $a/\infty=0$
as well as $\infty/a=\infty$ for all $a\in(0,\infty)$.
\medskip

For two nonnegative functions $a$ and $b$ 
defined on the same set, 
we will write $a\ls b$ whenever $a \le c b$ holds for a third function $c$
that only depends on a specific list of parameters.
Unless specified otherwise, 
this list consists of the domain $\Omega$, 
the dimension $d\in\IN$, the smoothness parameter $s>0$,
and the integrability parameters $p,\tau,q,\gamma,\gamma_1,\gamma_2\,{>0}$.
The function $c$ is called the implied constant.
We write $a\gs b$ if $b\ls a$
holds and $a\asymp b$
if both relations are satisfied.

\subsection{Domains and discrete sets}

In the following, let $\Omega\subset\mathbb{R}^d$ be a bounded domain (i.e., open and nonempty)
and let $P\subset \mathbb{R}^d$ be finite and nonempty. We are interested in the $L_\gamma(\Omega)$-norm of the distance function
\begin{equation*}
\dist(\cdot, P)\colon \mathbb{R}^d\to [0,\infty), 
\qquad \dist(x,P):=\min_{y\in P}\|x-y\|_2
\end{equation*}
for $0<\gamma \le \infty$. In particular, for $\gamma=\infty$, we obtain the covering radius
\[
h_{P,\Omega}:=\sup_{x\in\Omega}\dist(x,P)
=\|\dist(\cdot,P)\|_{L_{\infty}(\Omega)}
\]
of $ P $ with respect to $ \Omega $.
This is the minimal radius such that the (closures of the) balls $B(x,h_{P,\Omega}),$ where $ x\in P,$ cover $\Omega$.
Clearly, the finite volume of $\Omega$ gives $\|\dist(\cdot,P)\|_{L_{\gamma_1}(\Omega)}\ls \|\dist(\cdot,P)\|_{L_{\gamma_2}(\Omega)}$, 
whenever $0<\gamma_1\leq \gamma_2\le \infty$. Moreover, it is well known that we have
\begin{equation}
\label{eq:averagecoveringlower}
\inf_{|P| \le n} \|\dist(\cdot, P)\|_{L_{\gamma}(\Omega)} \,\asymp\, n^{-1/d} 
\qquad \text{for every} \quad
0<\gamma\leq \infty.
\end{equation}
The upper bound is quite obvious since the bounded set $\Omega$ may be covered
by $n$ balls of radius $\asymp n^{-1/d}$.
The lower bound is due to a standard volume argument:
We choose the constant $c$ such that the union of the $n$ balls 
with center in $P$ and radius $cn^{-1/d}$ has volume at most $\vol(\Omega)/2$,
which means that $\dist(\cdot,P)\ge c n^{-1/d}$ on a subset of $\Omega$ 
with volume at least $\vol(\Omega)/2$.

\medskip

A special Lipschitz domain in $\mathbb{R}^d$, $d\geq 2$, 
is the collection of all points $x=(x',x_d)$ with $x'\in\mathbb{R}^{d-1}$ such that
\[
h(x')<x_d<\infty,
\]
where $h\colon\mathbb{R}^{d-1}\to \IR$ is some Lipschitz function, i.e., 
there exists a constant $C>0$ with $|h(x')-h(y')|\leq C\|x'-y'\|_2$ for all $x',y'\in\mathbb{R}^{d-1}$. 
The set $\Omega$ is called a bounded Lipschitz domain (cf.\ \cite{NT06}) if there are points $x_1,\ldots,x_N\in\partial\Omega$ on the boundary and 
radii $r_1,\ldots,r_N>0$ such that $\partial\Omega$ is covered by the balls 
$B(x_1,r_1),\ldots,B(x_N,r_N)$ and
\[
B(x_i,r_i)\cap \Omega= B(x_i,r_i)\cap \Omega_i, \quad i=1,\ldots,N,
\]
where $\Omega_i$ is a suitable rotation of a special Lipschitz domain in $\mathbb{R}^d$.
\medskip

A (closed) cone with apex $x\in\mathbb{R}^d$, direction $\xi\in\mathbb{S}^{d-1}$, height (or radius) $r>0$ and opening angle $\theta\in (0,\pi/2)$ is given by
\[
C(x,\xi,r,\theta):=\left\{x+\lambda y\colon y\in \mathbb S^{d-1}, \langle y, \xi\rangle \geq \cos\theta,\lambda\in [0,r]\right\}.
\]
We will need Lemma 3.7 from \cite{W04} in the following special case.
\begin{lemma}\label{lem:ballincone}
Every cone $C(x,\xi,r,\theta)$ contains a ball of radius $c_{\theta}r$ with $c_{\theta}:=\frac{\sin \theta}{1+\sin\theta}$.
\end{lemma}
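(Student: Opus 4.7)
The plan is to exploit rotational symmetry and place the inscribed ball with its center on the axis of the cone, then reduce everything to two elementary distance estimates. Without loss of generality translate the apex to the origin and rotate so that $\xi=e_1$, so the cone becomes $C=\{\lambda y:y\in\mathbb{S}^{d-1},\,\langle y,e_1\rangle\ge\cos\theta,\,\lambda\in[0,r]\}$. Its boundary has three pieces: the apex, the lateral conical surface $\{z:\langle z,e_1\rangle=\|z\|_2\cos\theta,\ 0<\|z\|_2\le r\}$, and the spherical cap $\{z:\|z\|_2=r,\ \langle z,e_1\rangle\ge r\cos\theta\}$. I propose to consider the candidate ball $B(te_1,\rho)$ centered on the axis, with $t\in(0,r)$ and $\rho>0$ to be chosen.

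The first step is to show that $B(te_1,\rho)\subset C$ provided that $\rho\le t\sin\theta$ and $\rho\le r-t$. For the conical surface, a standard planar picture (intersect with any $2$-plane through the axis) shows that the perpendicular distance from the on-axis point $te_1$ to the infinite cone $\{\langle z,e_1\rangle\ge\|z\|_2\cos\theta\}$ equals $t\sin\theta$; equivalently, any $z$ with $\|z-te_1\|_2<t\sin\theta$ satisfies $\langle z,e_1\rangle\ge t-\|z-te_1\|_2>t(1-\sin\theta)>0$ and $\|z\|_2\le t+\|z-te_1\|_2$, whence one checks $\langle z,e_1\rangle\ge\|z\|_2\cos\theta$. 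For the spherical cap, the triangle inequality immediately yields $\|z\|_2\le t+\rho\le r$ for every $z\in B(te_1,\rho)$.

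The second step is the optimisation: maximise $\rho=\min(t\sin\theta,\,r-t)$ over $t\in(0,r)$. The two bounds are equal exactly when $t\sin\theta=r-t$, i.e.\ $t=r/(1+\sin\theta)$, and the common value is $\rho=r\sin\theta/(1+\sin\theta)=c_\theta r$. Choosing $t$ this way produces a ball of radius $c_\theta r$ contained in $C(x,\xi,r,\theta)$ and completes the proof.

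There is no real obstacle here; the only point that requires a line of care is the assertion that the perpendicular distance from $te_1$ to the lateral surface is $t\sin\theta$ (and that being within this distance indeed places the point inside the infinite cone, not merely inside a half-space tangent to it). Everything else is bookkeeping, and no convexity or higher-dimensional machinery is needed beyond the fact that balls are rotationally symmetric about any diameter.
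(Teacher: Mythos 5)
The paper itself does not prove this lemma; it cites it as a special case of Lemma~3.7 in Wendland's book \cite{W04}, so there is no in-text argument to compare against. Your approach (place the inscribed ball on the axis of revolution, bound its radius by both the distance to the lateral surface, $t\sin\theta$, and the distance to the spherical cap, $r-t$, then optimise $t$) is the standard elementary argument and does lead to the stated constant $c_\theta = \sin\theta/(1+\sin\theta)$.

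However, there is a gap in the one step you yourself flag as delicate. You assert that from $\langle z,e_1\rangle \ge t-\|z-te_1\|_2 > t(1-\sin\theta)$ and $\|z\|_2 \le t+\|z-te_1\|_2 < t(1+\sin\theta)$ ``one checks'' $\langle z,e_1\rangle \ge \|z\|_2\cos\theta$. Those two bounds alone do \emph{not} imply the conclusion: they would require $t(1-\sin\theta)\ge t(1+\sin\theta)\cos\theta$, which fails (e.g.\ at $\theta=\pi/4$ the left side is $\approx 0.29t$ and the right side is $\approx 1.21t$). The triangle-inequality bounds are too lossy; one is essentially replacing the cone by a half-space of the wrong orientation. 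The correct verification must use the full decomposition $z = te_1 + w$ with $w=(w_1,w')$, $w_1^2+\|w'\|^2<t^2\sin^2\theta$: after squaring (legitimate since $t+w_1>0$), the required inequality $(t+w_1)\sin\theta\ge\|w'\|\cos\theta$ reduces, upon substituting the bound on $\|w'\|^2$, to $(t\sin^2\theta+w_1)^2\ge 0$, which holds. So the lemma and your strategy are correct, but the specific ``whence one checks'' chain is not a valid deduction as written and should be replaced by this short calculation (or by the 2D picture stated as a genuine distance-to-a-halfplane computation together with rotational symmetry).
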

A general set $\Omega\subset \mathbb{R}^d$ is said to satisfy 
an interior cone condition (cf.\ \cite{W04}) if there are $r>0$ and $\theta\in (0,\pi/2)$ such that for all $x\in \Omega$ there is a unit vector $\xi(x)\in \mathbb{S}^{d-1}$ such that the cone
$C(x,\xi(x),r,\theta)$ is contained in $\Omega$. The following simple observations will be useful.
\begin{lemma}\label{lem:ballinball}
	Let $ \Omega\subset \mathbb{R}^d $ satisfy an interior cone condition with parameters $ r $ and $ \theta $. If $ B(x,\varrho)$ is a ball with center $ x\in\Omega $ and radius $ 0<\varrho\leq r $, there is a ball $ B(y,c_{\theta}\varrho)$ contained in $ B(x,\varrho)\cap \Omega$ with $c_{\theta} $ as in Lemma \ref{lem:ballincone}.
\end{lemma}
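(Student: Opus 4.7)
The plan is to apply Lemma \ref{lem:ballincone} to a suitably scaled cone at $x$. By the interior cone condition there exists a unit vector $\xi = \xi(x) \in \mathbb{S}^{d-1}$ such that $C(x,\xi,r,\theta) \subset \Omega$. Since $0 < \varrho \le r$, the shortened cone $C(x,\xi,\varrho,\theta)$ is still contained in $C(x,\xi,r,\theta)$, hence in $\Omega$. Moreover, by the very definition of the cone every point of $C(x,\xi,\varrho,\theta)$ lies within Euclidean distance $\varrho$ of the apex $x$, so this cone is also contained in $\overline{B(x,\varrho)}$. Lemma \ref{lem:ballincone} then supplies a ball $B(y,c_\theta \varrho)\subset C(x,\xi,\varrho,\theta)$, which already yields $B(y,c_\theta \varrho)\subset\Omega\cap \overline{B(x,\varrho)}$.

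It remains to upgrade the inclusion into the closed ball $\overline{B(x,\varrho)}$ to an inclusion into the open ball $B(x,\varrho)$ as claimed in the statement. This is a one-line triangle-inequality check: the ball inscribed in the cone in Wendland's proof of Lemma \ref{lem:ballincone} has its center on the axis at distance $\|y-x\|_2 = \varrho/(1+\sin\theta)$ from $x$, so for every $z \in B(y,c_\theta \varrho)$,
\[
\|z-x\|_2 \,\le\, \|z-y\|_2 + \|y-x\|_2 \,<\, c_\theta \varrho + \frac{\varrho}{1+\sin\theta} \,=\, \varrho,
\]
by the definition $c_\theta = \sin\theta/(1+\sin\theta)$. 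Hence $B(y,c_\theta \varrho)\subset B(x,\varrho)\cap \Omega$, as required. There is no real obstacle: the lemma is a direct geometric corollary of Lemma \ref{lem:ballincone}, and the only mild point is keeping track of open versus closed balls, which the above estimate settles.
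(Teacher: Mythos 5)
Your proof is correct and takes essentially the same route as the paper's: shorten the cone to height $\varrho$, note that it lies inside $B(x,\varrho)\cap\Omega$, and invoke Lemma~\ref{lem:ballincone}. The paper simply states that the interior of this cone is contained in $B(x,\varrho)\cap\Omega$ and leaves it at that, whereas you make the open-versus-closed bookkeeping explicit via the position $\|y-x\|_2=\varrho/(1+\sin\theta)$ of the inscribed ball's center; this is a slightly more careful rendering of the same argument, not a different one.
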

\begin{proof}
	By the cone condition, there is a cone with apex $x$, height $\varrho$ and angle $\theta$
	such that its interior is contained in $ B(x,\varrho) \cap \Omega$. Now Lemma \ref{lem:ballincone} completes the proof.
\end{proof}

Convex domains satisfy an interior cone condition, see e.g.\ \cite[Proposition~11.26]{W04}.

\begin{lemma}\label{lem:convex-cone}
Let $\Omega\subset \mathbb{R}^d$ be a bounded convex domain containing a ball of radius $r$. Then the closure of $\Omega$, denoted $\overline{\Omega}$, satisfies an interior cone condition with radius $r$ and angle $\theta=2\arcsin\bigl(r/2\diam(\Omega)\bigr)$.
\end{lemma}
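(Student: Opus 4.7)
Plan:

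The plan is to fix $x\in\overline{\Omega}$ and exhibit a cone $C(x,\xi,r,\theta)$ lying inside $\overline{\Omega}$, where the axis $\xi$ points from $x$ toward the center $z$ of the given inscribed ball $B(z,r)\subset\Omega$. The key observation is that $\overline{\Omega}$ is convex and contains both $x$ and $\overline{B(z,r)}$, so it suffices to show that every ray from $x$ within angular distance $\theta$ of $\xi$ reaches $\overline{B(z,r)}$ at some distance at least $r$; the segment from $x$ to that meeting point then lies in $\overline{\Omega}$ by convexity. Writing $D=\diam(\Omega)$, the case $x=z$ is trivial since $C(z,\xi,r,\theta)\subset \overline{B(z,r)}$ for any $\xi$, so I may assume $\rho:=\|z-x\|\in(0,D]$ and set $\xi=(z-x)/\rho$.

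For a unit vector $v$ making angle $\alpha\in[0,\theta]$ with $\xi$, the positive root of the quadratic $\|x+\lambda v-z\|^2=r^2$ is $\lambda^+=\rho\cos\alpha+\sqrt{r^2-\rho^2\sin^2\alpha}$. Its discriminant is nonnegative since $\sin\theta\le r/D$ and hence $\rho\sin\alpha\le D\sin\theta\le r$, and the point $x+\lambda^+ v\in \partial B(z,r)\subset \overline{\Omega}$. So the whole task reduces to verifying $\lambda^+\ge r$ for every admissible $\rho$ and $\alpha$.

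The key inequality is attacked by a case split. When $\rho\cos\alpha\ge r$, we have $\lambda^+\ge \rho\cos\alpha\ge r$ for free. Otherwise $r-\rho\cos\alpha>0$, and squaring $\sqrt{r^2-\rho^2\sin^2\alpha}\ge r-\rho\cos\alpha$ reduces $\lambda^+\ge r$ to the equivalent condition $\rho\le 2r\cos\alpha$. This is precisely where the specific choice of $\theta$ is designed in: because $B(z,r)\subset\Omega$ forces $D\ge 2r$, we have $r/(2D)\le 1/4<\sin(\pi/8)$, hence $\theta<\pi/4$ and $\cos\alpha\ge\cos\theta>1/\sqrt{2}$. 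This yields $r/\cos\alpha<2r\cos\alpha$; combined with $\rho<r/\cos\alpha$ we get $\rho<2r\cos\alpha$ as required. The main obstacle is really just this algebraic case split, and what makes it go through cleanly is the relation $r\le D/2$ forcing $\theta<\pi/4$, which reconciles the two bounds $r/\cos\alpha$ and $2r\cos\alpha$.
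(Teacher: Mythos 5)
The paper does not prove this lemma itself but cites Wendland's Proposition~11.26, so there is no in-paper argument to compare against. Your proof is correct and self-contained. Let me record the checks I made. The coefficients in the quadratic are right: $\|x+\lambda v - z\|^2 = \lambda^2 - 2\lambda\rho\cos\alpha + \rho^2$, yielding $\lambda^+ = \rho\cos\alpha + \sqrt{r^2 - \rho^2\sin^2\alpha}$. The discriminant is nonnegative because $\sin\theta = 2\sin(\theta/2)\cos(\theta/2) = (r/D)\cos(\theta/2)\le r/D$, so $\rho\sin\alpha\le D\sin\theta\le r$. In the second case of the split, the algebra $\lambda^+\ge r\Leftrightarrow \rho\le 2r\cos\alpha$ is correct, and the closing step uses $r\le D/2$ (forced by $B(z,r)\subset\Omega$) to get $\sin(\theta/2)=r/(2D)\le 1/4<\sin(\pi/8)$, hence $\theta<\pi/4$ and $\cos\alpha>1/\sqrt2$, so $r/\cos\alpha<2r\cos\alpha$; combined with $\rho<r/\cos\alpha$ this gives $\rho<2r\cos\alpha$. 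The degenerate case $x=z$ is handled correctly since then the cone of height $r$ is contained in $\overline{B(z,r)}$. With $\lambda^+\ge r$, every segment $\{x+\lambda v:\lambda\in[0,r]\}$ lies in $\overline\Omega$ by convexity (its endpoints $x$ and $x+\lambda^+v\in\partial B(z,r)$ do), and taking the union over admissible $v$ gives the cone. This is essentially the same elementary convexity-plus-trigonometry argument one finds in Wendland, so the proposal is both correct and in the spirit of the cited source.
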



Convex domains are also Lipschitz domains. A similar result may be found in Dekel and Leviatan \cite[Lemma 2.3]{DL04}.
\begin{lemma}\label{lem:convex-lip}
Every bounded convex domain is a bounded Lipschitz domain.
\end{lemma}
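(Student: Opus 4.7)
The plan is to build a local Lipschitz graph representation of $\partial \Omega$ at each boundary point, using the interior cone condition already given by Lemma~\ref{lem:convex-cone} together with convexity, and then to cover the compact set $\partial \Omega$ by finitely many such charts. By Lemma~\ref{lem:convex-cone}, $\overline{\Omega}$ satisfies an interior cone condition with some $r>0$ and $\theta\in(0,\pi/2)$. Fix $x_0\in\partial\Omega$ together with a unit vector $\xi$ such that $C(x_0,\xi,r,\theta)\subset\overline{\Omega}$; after a rigid motion I may assume $x_0=0$ and $\xi=e_d$. For $y'\in\mathbb{R}^{d-1}$ near the origin I would define
\[
h(y')\,:=\,\inf\{t\in\mathbb{R}:(y',t)\in\overline{\Omega}\}.
\]
Since $C(0,e_d,r,\theta)$ contains $(y',t)$ whenever $t\ge\|y'\|_2\cot\theta$ and $\|y'\|_2$ is sufficiently small, $h$ is finite on a $(d-1)$-dimensional neighborhood of $0$; convexity makes $\{t:(y',t)\in\overline{\Omega}\}$ a closed interval, so $(y',h(y'))\in\partial\Omega$ and the set $\Omega$ sits on the $+e_d$ side of this boundary point.

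The Lipschitz property of $h$ is then a purely geometric consequence of convexity. Fix small $y'$ and put $q:=(y',h(y'))$. By convexity of $\overline{\Omega}$, the hull $\mathrm{conv}(\{q\}\cup C(0,e_d,r,\theta))$ lies in $\overline{\Omega}$, and for $\|y'\|_2$ small relative to $r$ this hull contains a cone with apex $q$, axis $e_d$, and opening angle $\theta'\in(0,\theta]$ depending only on $\theta$ (not on $q$). Any point $(z',t)$ satisfying $t\ge h(y')+\cot(\theta')\|z'-y'\|_2$ then lies in this cone, hence in $\overline{\Omega}$, forcing $h(z')\le h(y')+\cot(\theta')\|z'-y'\|_2$; swapping $y'$ and $z'$ yields $|h(y')-h(z')|\le\cot(\theta')\|y'-z'\|_2$. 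A further shrinking of the chart radius to some $\rho_0>0$ ensures that $\Omega\cap B(0,\rho_0)=\{(y',t):t>h(y')\}\cap B(0,\rho_0)$, because the ``far'' portion of the bounded set $\Omega$ stays at positive distance from $0$. Compactness of $\partial\Omega$ then supplies finitely many such charts, matching the paper's definition of a bounded Lipschitz domain.

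The main obstacle, in my view, is the geometric step invoked in the second paragraph: verifying quantitatively that $\mathrm{conv}(\{q\}\cup C(0,e_d,r,\theta))$ contains a cone at $q$ in direction $e_d$ with a uniform opening angle $\theta'$. This is intuitively clear because the cone has positive width at every positive height and $q$ is close to the apex $0$, but controlling $\theta'$ explicitly in terms of $\theta$ and $\|q\|_2/r$ is the point where careful bookkeeping is required; once that is in hand the rest of the argument is routine.
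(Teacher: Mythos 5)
Your approach is valid but takes a genuinely different route from the paper. The paper's chart at a boundary point $x_i$ is oriented so that the last coordinate points toward the center $x_0$ of a fixed interior ball $B(x_0,r)\subset\Omega$; with that choice, the floor function $h(x')=\inf\{x_d\colon (x',x_d)\in\Omega\}$ is convex in $x'$, and the Lipschitz estimate on the compact sub-ball $\overline{B^*}$ follows immediately from the classical fact that a finite convex function on an open convex set is Lipschitz on compact subsets, cited from \cite{WSU72}. This is short, requires no quantitative geometry, and does not invoke Lemma~\ref{lem:convex-cone} at all. You instead orient the chart along a cone direction supplied by Lemma~\ref{lem:convex-cone} and extract a Lipschitz constant directly from the cone geometry via the hull $\mathrm{conv}\bigl(\{q\}\cup C(0,e_d,r,\theta)\bigr)$. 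That is more self-contained (no external citation) and yields an explicit Lipschitz constant in terms of $\theta$, at the cost of the geometric computation you flag. Note also that your floor function is convex for the same reason the paper's is, so you could have bypassed the convex-hull step by citing the same classical fact.

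On the bookkeeping you acknowledge, one point should be made explicit rather than left implicit. For $\mathrm{conv}\bigl(\{q\}\cup C\bigr)$ to contain a cone at $q$ with axis $e_d$ and uniform opening, you need $q$ close to the apex $0$, i.e., a two-sided bound on $h(y')$ and not merely $\|y'\|$ small. The upper bound $h(y')\le\|y'\|\cot\theta$ is free because the cone lies in $\overline\Omega$, but the lower bound $h(y')\ge -\|y'\|\cot\theta$ needs a separate argument: take a supporting hyperplane of $\overline\Omega$ at $0$ with unit normal $\nu=(\nu',\nu_d)$; the inclusion $C(0,e_d,r,\theta)\subset\overline\Omega$ forces $\nu_d\ge\sin\theta$, whence $h(y')\ge-\langle\nu',y'\rangle/\nu_d\ge-\|y'\|\cot\theta$. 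With both bounds in hand, one can realize the desired cone at $q$ explicitly, for instance as the cone from $q$ over the ball $B\bigl((0,r\cos\theta),r\sin\theta\bigr)\subset C$, with axis and opening controlled by $\|q\|/r$; the rest of your proof is then routine.
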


\begin{proof}
If $\Omega\subset \mathbb{R}^d$ is a bounded convex domain, 
we find a ball $B(x_0,r)\subset \Omega$. As $\partial\Omega$ is compact, 
there are points $x_1,\ldots, x_N$ such that the balls 
$B(x_i,r/2)$, $i\le N$, cover $\partial\Omega$. 
For all $i\le N$ we show that 
\begin{equation}\label{eq:Lip}
 B(x_i,r/2)\cap \Omega=B(x_i,r/2)\cap \Omega_i,
\end{equation} 
where $\Omega_i$ is a rotation of a special Lipschitz domain in $\mathbb{R}^d$. 
Applying a suitable rotation (and translation), 
we may assume that $x_0=0$ 
and that $x_i=(0,\hdots,0,a)$ for some $a\le -r$. Consider the open ball $B'=B_{d-1}(0,r)$ in $\IR^{d-1}$. 
For $x'\in B'$, we define the set $A(x') = \{x_d\in\IR \colon (x',x_d)\in \Omega\} $.
Since $\Omega$ is convex and open, $A(x')$ is an open interval. 
Moreover, $A(x')$ is nonempty since $0\in A(x')$.
We define $h(x')$ to be the infimum of $A(x')$.
The convexity of $\Omega$ implies that the function $h\colon B'\to \IR$ is convex.
Since every convex function on a convex domain in $\IR^{d-1}$ is Lipschitz
on every compact subset of the domain, see \cite{WSU72},
the function $h$ is Lipschitz on the closure of the ball $B^*=B_{d-1}(0,r/2)$.
This Lipschitz continuity carries over to the whole $\IR^{d-1}$
if we set $h(\lambda x')=h(x')$ for all $x' \in \partial B^*$ and $\lambda \ge 1$
(thereby redefining $h$ on $B'\setminus B^*$).
It remains to note that for every $x=(x',x_d)\in B(x_i,r/2)$ it holds that
\[
 x \in \Omega 
 \quad\Leftrightarrow\quad 
 x_d \in A(x')
 \quad\Leftrightarrow\quad 
 x_d > h(x'),
\]
proving \eqref{eq:Lip} for the special Lipschitz domain $\Omega_i=\{(x',x_d)\colon x_d>h(x')\}$.
\end{proof}

It seems to be well known that bounded Lipschitz domains
satisfy an interior cone condition.
This is used in \cite{NT06} and also stated by Adams and Fournier \cite[4.11]{AF03} for domains with the strong local Lipschitz condition, which include bounded Lipschitz domains.
Since we did not find it elsewhere, we include the proof.

\begin{lemma}\label{lem:Lip-implies-CC}
Every bounded Lipschitz domain satisfies an interior cone condition. 
\end{lemma}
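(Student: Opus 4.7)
The plan is to combine a routine localization at the boundary with one explicit geometric calculation for the epigraph of a single Lipschitz function. Write the cover as $B(x_i,r_i)$, $i=1,\dots,N$, with rotations $R_i$ and Lipschitz functions $h_i\colon\IR^{d-1}\to\IR$ of constants $L_i$ defining the special Lipschitz domains $\Omega_i$. Set $L=\max_i L_i$ and choose any angle $\theta\in(0,\pi/2)$ with $\tan\theta<1/L$, which is equivalent to $\cos\theta-L\sin\theta>0$. By shrinking each $r_i$ slightly, we may assume the smaller balls $B(x_i,r_i/2)$ still cover $\partial\Omega$.

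The first step is the key geometric fact: for each $i$ and every $x\in\Omega_i$, the cone with apex $x$, axis $R_ie_d$, arbitrary height and angle $\theta$ is contained in $\Omega_i$. After undoing the rotation we may work with the epigraph $\{x_d>h_i(x')\}$ and axis $e_d$. A unit vector $u=(u',u_d)$ with $u_d\ge\cos\theta$ automatically satisfies $\norm{u'}_2\le\sin\theta$, so for any $\lambda\ge 0$ the Lipschitz bound gives
\[
(x_d+\lambda u_d)-h_i(x'+\lambda u')\;\ge\;\bigl(x_d-h_i(x')\bigr)+\lambda\bigl(\cos\theta-L\sin\theta\bigr)\;>\;0.
\]
Hence every point of the cone lies in $\Omega_i$. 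This is the only estimate in the argument and I expect it to be the main technical content; the rest is bookkeeping.

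The second step is a Lebesgue-number style argument. The set $K:=\overline{\Omega}\setminus\bigcup_i B(x_i,r_i/2)$ is a compact subset of the open set $\Omega$, so $\delta_1:=\dist(K,\partial\Omega)>0$ and every $x\in K$ satisfies $B(x,\delta_1)\subset\Omega$. Define $r:=\min\bigl(\delta_1,\tfrac12\min_i r_i\bigr)$ and distinguish two cases for $x\in\Omega$. If $x\in K$, pick any unit vector $\xi$; the cone $C(x,\xi,r,\theta)$ lies inside $B(x,\delta_1)\subset\Omega$. If instead $x\in B(x_i,r_i/2)$ for some $i$, set $\xi(x):=R_ie_d$; by the triangle inequality $C(x,\xi(x),r,\theta)\subset B(x_i,r_i)$, and by the first step this cone is contained in $\Omega_i$, hence by the Lipschitz domain property $C(x,\xi(x),r,\theta)\subset B(x_i,r_i)\cap\Omega_i=B(x_i,r_i)\cap\Omega\subset\Omega$.

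Thus $r$ and $\theta$ witness an interior cone condition for $\Omega$. The only delicate point is the strictness in the inequality of the first step (needed because the cone is closed while $\Omega$ is open), which is secured by taking $\tan\theta<1/L$ strictly and using that the apex $x$ lies in $\Omega_i$, giving $x_d-h_i(x')>0$.
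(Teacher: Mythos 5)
Your proposal is correct and follows essentially the same route as the paper's proof: first the explicit cone computation for a special Lipschitz epigraph, then a compactness argument to localize at the boundary and produce a uniform height. The one imprecision is the claim that ``by shrinking each $r_i$ slightly'' one may assume $B(x_i,r_i/2)$ still cover $\partial\Omega$: shrinking the $r_i$ makes the half-radius balls smaller, so it cannot create a covering. What you actually need is the paper's observation that the balls $B(x_i,ar_i)$ still cover $\partial\Omega$ for some $a\in(0,1)$ (a compactness argument), after which you either work with the factor $a$ directly, or refine the cover by finitely many smaller balls centered on $\partial\Omega$ to reduce to the factor $1/2$; either fix is routine. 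Also, the strict inequality $\tan\theta<1/L$ is a harmless but unnecessary precaution: since the apex $x$ lies in the open set $\Omega_i$, the term $x_d-h_i(x')$ is already strictly positive, so the paper's choice $\theta=\arctan(1/L)$, which makes $\cos\theta-L\sin\theta=0$, works as well.
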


\begin{proof}
We first show that special Lipschitz domains satisfy an interior cone condition.
Let therefore 
\[
 \Omega_0=\left\{ x=(x',x_d)\in \IR^d \colon x_d >h(x')  \right\}
\]
where $h\colon\mathbb{R}^{d-1}\to \IR$ is a Lipschitz continuous function and $d\ge 2$. We set 
\[
L=\sup_{x\ne y} \frac{\vert h(x)-h(y) \vert}{\Vert x - y \Vert},
\qquad \theta=\arctan L^{-1},
\qquad \xi=(0,\hdots,0,1)\in\IR^d
\]
and $\theta=\pi/2$ if $L=0$.
Let $x=(x',x_d)\in \Omega_0$ and $z=(z',z_d)\in C(x,\xi,\infty,\theta)$.
Then we have $z=x+\lambda y$ for some $\lambda \ge 0$ and $y\in\mathbb S^{d-1}$
with $y_d\ge \cos \theta$, and therefore,
\begin{multline*}
 z_d = x_d + \lambda y_d > h(x') + \lambda \cos \theta
 \ge h(z') - L \Vert z'-x' \Vert + \lambda \cos \theta \\
 = h(z') - L \lambda \sqrt{1-y_d^2} + \lambda \cos \theta
 \ge h(z') + \lambda( \cos \theta - L \sin \theta)
 = h(z').
\end{multline*}
We thus obtain that $C(x,\xi,\infty,\theta) \subset \Omega_0$ for all $x\in\Omega_0$.
\medskip

Let now $\Omega\subset \IR^d$ be a bounded Lipschitz domain.
Choose points $x_1,\ldots,x_N\in\partial\Omega$ and radii $r_1,\ldots,r_N>0$ such that 
$\partial\Omega$ is covered by the balls 
$B_i=B(x_i,r_i)$ and $B_i\cap \Omega= B_i\cap \Omega_i$, where $i\le N$ and $\Omega_i$ is a rotation of a special Lipschitz domain.
Clearly, the interior cone condition is preserved under rotations,
such that the $\Omega_i$ satisfy an interior cone condition
with angle $\theta_i$ (and infinite height).
First, we note that there is some $a\in(0,1)$ such that
the balls $aB_i$ still cover $\partial \Omega$:
Else, using the compactness of $\partial \Omega$, we would 
obtain a convergent sequence $(y_k)$ in $ \partial \Omega$ with
$\Vert y_k - x_i \Vert \ge (1-1/k) r_i$ for all $k\in \IN$ and $i\le N$.
Its limit $y\in\partial \Omega$ would satisfy $\Vert y - x_i \Vert \ge r_i$
and therefore $y\not\in B_i$, a contradiction.
\medskip

Let thus $b=(a+1)/2$,
$r=\min_i r_i$ and $\theta=\min_i \theta_i$, and let $x\in \Omega$.
If $x\in bB_i$ for some $i\le N$, then there is a cone with apex $x$, height $(1-b)r=(1-a)r/2$
and angle $\theta$ which is contained in both $B_i$ and $\Omega_i$
and therefore in $\Omega$.
On the other hand, if $x\not\in b B_i$ for all $i\le N$,
then the whole ball with center $x$ and radius $(b-a)r=(1-a)r/2$
is contained in $\Omega$: Else, there would be some $y\in\partial\Omega$
with $\Vert y-x \Vert \le (b-a)r$ and since $y\in a B_i$ for some $i\le N$
also $\Vert x_i-x \Vert < ar_i+ (b-a)r \le br_i$, which is a contradiction.
Thus $\Omega$ satisfies an interior cone condition with angle $\theta$
and height $(1-a)r/2$.
\end{proof}

A convex domain also satisfies the following local version of the cone condition.

\begin{lemma}
\label{lem:local-CC}
Let $\Omega\subset\mathbb{R}^d$ be a bounded convex domain containing a ball of radius $r>0$. Then the closure of the intersection of $\Omega$ with any cube $B_d^\infty(x, \varrho )$ centered at $x\in \overline{\Omega}$ and having $\ell^{\infty}$-radius $0< \varrho \le r$ satisfies an interior cone condition with height $c_\theta  \varrho $ and angle $\theta'=2\arcsin(c_{\theta}/4\sqrt d)$, where $c_\theta$ is as in Lemma~\ref{lem:ballincone} and $\theta$ is as in Lemma~\ref{lem:convex-cone}.
\end{lemma}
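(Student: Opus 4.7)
\emph{Plan.} My plan is to place a Euclidean reference ball $B(y^*,R)$ with $R=c_\theta\varrho$ inside $K:=\overline{\Omega\cap B_d^\infty(x,\varrho)}$, to exploit the convexity of $K$ to get $\mathrm{conv}(\{y\}\cup B(y^*,R))\subset K$ for every $y\in K$, and finally to show that a cone from $y$ towards $y^*$ of height $R$ and aperture $\theta'$ fits inside this convex hull.

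First, I would apply the interior cone condition on $\overline{\Omega}$ given by Lemma~\ref{lem:convex-cone} at the point $x\in\overline{\Omega}$ with radius truncated to $\varrho\le r$, producing a unit vector $\xi(x)$ such that $C(x,\xi(x),\varrho,\theta)\subset \overline{\Omega}\cap\overline{B(x,\varrho)}$. Lemma~\ref{lem:ballincone} provides an open Euclidean ball $B(y^*,R)$ inside this cone; since it is an open set contained in $\overline{\Omega}\cap\overline{B(x,\varrho)}$, it must actually lie in $\Omega\cap B(x,\varrho)\subset K$. The convexity of $K$ (closure of an intersection of two convex sets) then gives $\mathrm{conv}(\{y\}\cup B(y^*,R))\subset K$ for every $y\in K$.

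Next, I would verify $C(y,\xi,R,\theta')\subset\mathrm{conv}(\{y\}\cup B(y^*,R))$ for $\xi=(y^*-y)/\|y^*-y\|$ (and any direction when $y=y^*$). Writing a cone point as $p=y+\lambda v$ with $\lambda\in[0,R]$ and $\langle v,\xi\rangle\ge\cos\theta'$, and setting $D:=\|y-y^*\|$, either $\|p-y^*\|^2=\lambda^2-2\lambda D\langle v,\xi\rangle+D^2\le R^2$ (so $p\in B(y^*,R)\subset K$), or $p=(1-t)y+tq$ for some $t\in[0,1]$ and $q\in B(y^*,R)$, which reduces to finding $\mu:=\lambda/t\ge\lambda$ that solves $\mu^2-2D\langle v,\xi\rangle\mu+(D^2-R^2)\le 0$. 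The relevant estimates on the aperture, obtained from double-angle formulas for $\theta'=2\arcsin(c_\theta/(4\sqrt d))$, are $\sin\theta'\le c_\theta/(2\sqrt d)$ and $\cos\theta'\ge 1-c_\theta^2/(8d)>1/\sqrt 2$; together with the cube-diameter bound $D\le 2\sqrt d\,\varrho$, the first gives $D\sin\theta'\le R$ and hence real roots of the above quadratic, while the second makes the two easy regions $\{D\le 2R\cos\theta'\}$ and $\{D\ge R/\cos\theta'\}$ cover all of $[0,2\sqrt d\,\varrho]$, so that $\lambda=R$ always sits in the admissible interval.

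I expect the main obstacle to be the intermediate range $D\sim R$ where $y$ lies just outside $B(y^*,R)$: there, neither the ball itself nor the naive tangent cone of length $\sqrt{D^2-R^2}$ supplies a cone of full height $R$, and it is precisely the bound $\cos\theta'>1/\sqrt 2$ (equivalently $2\cos^2\theta'>1$) that forces the two easy regimes to overlap and closes the gap.
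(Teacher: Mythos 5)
Your proposal is correct and follows the same approach as the paper: place a Euclidean ball of radius $R=c_\theta\varrho$ inside $K=\overline{\Omega\cap B_d^\infty(x,\varrho)}$ via Lemmas~\ref{lem:convex-cone} and~\ref{lem:ballincone} applied at the center $x$, then combine convexity of $K$ with the diameter bound $\diam(K)\le 2\sqrt d\,\varrho$. The only difference is that the paper simply invokes Lemma~\ref{lem:convex-cone} a second time on the convex set $K$ to finish, whereas your step 4 re-derives that lemma's cone-in-convex-hull computation by hand (correctly, including the $\theta'<\pi/4$ overlap argument).
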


\begin{proof}
By Lemma~\ref{lem:convex-cone}, the set $\overline \Omega$ contains a cone with height $ \varrho \le r$, 
apex $x$ and angle $\theta$.
Clearly, this cone is contained in $\overline{\Omega \cap B(x, \varrho )}$,
which is a subset of $A(x, \varrho ):=\overline{\Omega\cap B_d^\infty(x, \varrho )}$.
By Lemma~\ref{lem:ballincone}, there is a ball $B(y,c_{\theta} \varrho )$ in this cone and thus in $A(x, \varrho )$. 
The proof is finished if we apply Lemma~\ref{lem:convex-cone}
to the convex set $A(x, \varrho )$ since its diameter is at most $2\varrho \sqrt{d}$.
\end{proof} 

One of our main tools will be the following result due to Wendland,
which we state (almost) in the formulation of 
Theorem 4.7 in his book \cite{W04}. 
It will provide us with a well-behaved linear algorithm. 

\begin{lemma}[\cite{W04}]\label{lem:wendland}
Let $K \subset\mathbb{R}^d$ be a compact set satisfying an interior cone condition with parameters $r$ and $\theta$, and let $m\in \mathbb{N}$. There 
are constants $c_0,c_1,c_2>0$ depending solely on $\theta,m$ and $d$ 
such that for any finite nonempty $P\subset K$ with covering radius
$h_{P,K}\le c_1 r$ there exist continuous 
functions $u_x\colon K\to\mathbb{R}$, $x\in P,$ with
\begin{enumerate}[label={\emph{(\roman*)}}]
\item $\displaystyle \pi(y)=\sum_{x\in P} \pi(x) u_x(y)$ for all $y\in K$
and $\pi\in \mathcal{P}_m^d $,
\item $\displaystyle \sum_{x\in P} |u_x(y)| \le c_0$ for all $y\in  K$ and
\item $u_x(y)=0$ for all $y\in K$ and $x\in P$ with $\Vert x-y \Vert \ge c_2 h_{P, K}$.
\end{enumerate}
\end{lemma}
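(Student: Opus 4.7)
The plan is to realize the $u_x$ as the cardinal functions of a moving least squares (MLS) approximation, following the construction in Chapters~3--4 of Wendland's book. Fix a continuous cutoff $\varphi\colon[0,\infty)\to[0,1]$ that is positive on $[0,1/2)$ and vanishes on $[1,\infty)$, set $\delta:=c_2 h_{P,K}$ with $c_2$ to be chosen, and for $y\in K$ put $w_x(y):=\varphi(\|x-y\|/\delta)$. Let $(p_1,\dots,p_Q)$ be the monomial basis of $\mathcal{P}_m^d$ centered at $y$ and rescaled by $\delta$, so each $p_\ell$ is $O(1)$ on $B(y,\delta)$. Define $u_x(y)$ to solve the weighted least squares problem
\[
\min\Big\{\sum_{x\in P}\frac{u_x^2}{w_x(y)}\,:\,\sum_{x\in P}u_x\,\pi(x)=\pi(y)\ \forall\pi\in\mathcal{P}_m^d\Big\},
\]
with the convention $u_x=0$ whenever $w_x(y)=0$. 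A Lagrange multiplier calculation yields the closed form $u_x(y)=w_x(y)\,p(x)^\top G(y)^{-1}p(y)$ with Gramian $G(y):=\sum_{x\in P}w_x(y)\,p(x)p(x)^\top$. Property (i) is exactly the constraint, (iii) is immediate from the support of $\varphi$ as soon as $c_2\ge 1$, and continuity of each $u_x$ follows from continuity of $w_x$ and of $G(y)^{-1}$.

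All the work goes into (ii), which reduces to a cardinality bound $|P_y|\le N(\theta,d,c_2)$ for the active set $P_y:=\{x\in P:w_x(y)>0\}$ plus a uniform lower bound on the smallest eigenvalue of $G(y)$. The cardinality bound is a volume-packing argument using the cone condition, which forces a minimum separation between points of $P$ proportional to $h_{P,K}$. For the Gramian, the interior cone condition produces a cone $C(y,\xi(y),r,\theta)\subset K$ at $y$; after truncating its height to $\delta/2\le r/2$ (which imposes the hypothesis $h_{P,K}\le c_1 r$ with $c_1:=1/(2c_2)$) and applying Lemma~\ref{lem:ballincone}, one obtains a ball $B(y^*,c_\theta\delta/4)\subset K\cap B(y,\delta/2)$. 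Every point of this ball lies within $h_{P,K}=\delta/c_2$ of some point of $P_y$, so for $c_2$ sufficiently large (depending on $\theta,m,d$) the discrete set $P_y$ is a norming set for $\mathcal{P}_m^d$ on the ball: the Bernstein--Markov inequality for polynomials on a ball, combined with a telescoping perturbation argument, yields
\[
\|\pi\|_{L_\infty(B(y^*,c_\theta\delta/4))}\le C\max_{x\in P_y}|\pi(x)|
\]
for every $\pi\in\mathcal{P}_m^d$, with $C=C(\theta,m,d)$. Combined with the norm equivalence between polynomial coefficients and the $L_\infty$-norm on $B(y,\delta)$ (valid after the rescaling of the basis), this translates into $G(y)\succcurlyeq c\,I$ with $c=c(\theta,m,d)>0$.

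Inserting this lower bound together with the trivial estimates $w_x(y)\le 1$, $\|p(x)\|_2,\|p(y)\|_2=O(1)$ into the closed form for $u_x(y)$ and summing over $x\in P_y$ produces $\sum_x|u_x(y)|\le c_0$, which is (ii). The main obstacle is precisely this quantitative unisolvency step: transferring the Markov inequality on the ball to a sampling inequality on the discrete set $P_y$, with constants independent of $P$ and $y$. The hypothesis $h_{P,K}\le c_1 r$ with small $c_1$ is what makes this reduction possible, since it ensures that at scale $\delta$ the local geometry of $K$ still looks cone-like and the problem can be rescaled to a fixed template on the unit ball.
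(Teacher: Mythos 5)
Your proposal departs completely from the paper: the paper does \emph{not} re-derive the moving least squares construction, but instead reduces the statement to Wendland's Theorem~4.7 by a short preprocessing step, whereas you sketch a full proof of that theorem from scratch. Comparing the two, the paper's route is an order of magnitude shorter and cleaner; your route is more self-contained but it imports the full technical weight of the MLS machinery and, as written, it contains a genuine gap.

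The gap is in the sentence claiming that ``the cone condition forces a minimum separation between points of $P$ proportional to $h_{P,K}$.'' The interior cone condition is a hypothesis on the geometry of $K$, not on the configuration of $P$; the covering radius bound $h_{P,K}\le c_1 r$ is a one-sided (fill) condition and places no constraint whatsoever on the separation radius of $P$. One can take $P$ to be a fine grid together with an arbitrarily large cluster of near-coincident points, so the active set $P_y$ has unbounded cardinality. Your bound on $\sum_x |u_x(y)|$ is obtained by multiplying a per-term estimate $|u_x(y)|\le w_x\,\|p(x)\|\,\|G(y)^{-1}\|\,\|p(y)\|=O(1)$ by the claimed bound $|P_y|\le N(\theta,d,c_2)$, and the latter is simply false in general. (The MLS cardinal weights do in fact remain $\ell^1$-bounded under duplication of points, but seeing this requires a Cauchy--Schwarz argument coupling $\sum_x w_x(y)$ to $\lambda_{\min}(G(y))$, which your per-term-times-cardinality bound does not capture; and in the cited formulation Wendland's Theorem~4.7 is stated under a quasi-uniformity assumption precisely so that this issue does not arise.)

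The paper sidesteps all of this: greedily extract a maximal $h_{P,K}$-separated subset $X\subset P$, verify by the triangle inequality that $q_X\ge\frac12 h_{P,K}$ and $h_{X,K}\le 2h_{P,K}$, apply Wendland's Theorem~4.7 to $X$, and set $u_x=0$ for $x\in P\setminus X$. This recovers the required properties with constants depending only on $\theta,m,d$. Your argument can likely be repaired by inserting exactly this pruning step before defining the weight functions $w_x$, after which the packing bound $|X\cap B(y,\delta)|\le N$ is legitimate; without it, the cardinality estimate and hence property (ii) do not follow.
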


\begin{proof}
 To derive this statement from \cite[Theorem~4.7]{W04},
 we will show that there is a subset $X$ of $P$ with $h_{X, K}\le 2 h_{P, K}$
 and separation distance
 \[
  q_X := \frac 12 \min_{x,y\in X, x\ne y} \Vert x - y \Vert_2 \ge \frac 12 h_{P, K}
 \]
 and apply Theorem~4.7 to the subset $X$ (setting $u_x=0$ for all $x\in P \setminus X$).
 To this end, we choose an arbitrary $x_1\in P$ and recursively choose $x_i \in P\setminus \bigcup_{j<i} B(x_j,h_{P, K})$ until the latter set is empty. We obtain a subset $X$ of $P$ which clearly satisfies $q_X\ge \frac 12 h_{P, K}$. Moreover, for any $y\in  K$, there is some $x\in P$ with $\Vert y- x \Vert_2 \le h_{P, K}$. Since $P\setminus \bigcup_{\tilde x \in X} B(\tilde x,h_{P, K})=\emptyset$,
 there is some $\tilde x \in X$ with $\Vert x - \tilde x \Vert_2 \le h_{P, K}$. The triangle inequality gives $\Vert y - \tilde x \Vert_2 \le 2 h_{P, K}$ and therefore $h_{X, K}\le 2 h_{P, K}$.
\end{proof}

\subsection{Sobolev spaces}
\label{sec:Sobolev-spaces}

Let $\Omega\subset \IR^d$ be a domain. 
For $ s\in\mathbb{N} $ and $ 1\leq p\le \infty $ we consider the Sobolev space
\[
W^s_p(\Omega):=\big\{f\in L_p(\Omega)\colon \|f\|_{W^s_p(\Omega)}:=\Big(\sum_{|\alpha|\leq s} \|D^{\alpha}f\|_{L_p(\Omega)}^p\Big)^{1/p}<\infty\big\},
\]
where $ \alpha\in \mathbb{N}_0^d $ is a multiindex, $ |\alpha|=\alpha_1+\ldots+\alpha_d $
and $D^\alpha f = \frac{\partial^{|\alpha|}}{\partial x_1^{\alpha_1}\cdots\partial x_d^{\alpha_d}} f$ denotes a weak partial derivative of order $|\alpha|$.
This is a Banach space with respect to the norm $\Vert \cdot \Vert_{W^s_p(\Omega)}$.
In addition to the norm, 
we will also work with the semi-norm
\[
 |f|_{W^s_p(\Omega)}:= \Big(\sum_{|\alpha|= s} \|D^{\alpha}f\|_{L_p(\Omega)}^p\Big)^{1/p},
\]
which has better scaling properties. If $\Omega$ is sufficiently regular, functions from the Sobolev space $W^s_p(\Omega)$ may be 
extended to $W^s_p(\IR^d)$, see Stein \cite[Section~VI.3]{Ste71}. 
Note that Stein uses the notion of minimally smooth domains which entails bounded Lipschitz domains.

\begin{lemma}[\cite{Ste71}]
\label{lem:extension-sobolev}
Let $\Omega \subset \IR^d$ be a bounded Lipschitz domain, 
$1\le p\le \infty$ and $s\in \IN$ with $s>d/p$.
Then there is a bounded linear operator 
${\rm ext}\colon W^s_p(\Omega) \to W^s_p(\IR^d)$ with
${\rm ext}(f)\vert_\Omega = f$ for all $f\in W^s_p(\Omega)$.
\end{lemma}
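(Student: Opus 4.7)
The plan is to derive this as a direct application of Stein's extension theorem, \cite[Theorem VI.3.5]{Ste71}, which produces a bounded linear extension operator from $W^s_p(\Omega)$ to $W^s_p(\mathbb{R}^d)$ for any $s\in\mathbb{N}$ and $1\le p\le\infty$, provided $\Omega$ is \emph{minimally smooth} in Stein's sense. Observe that the assumption $s>d/p$ plays no role in the extension construction; it appears in the statement only because it is the standing assumption of the paper (ensuring $W^s_p(\Omega)\hookrightarrow C(\Omega)$).

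The only work, then, is to verify that a bounded Lipschitz domain in the sense of the paper is minimally smooth in Stein's sense. Stein's definition requires a (possibly countable) locally finite collection of open sets $\{U_i\}$ whose union contains an $\varepsilon$-neighborhood of $\partial\Omega$, such that on each $U_i$ the domain $\Omega$ coincides with a rotated special Lipschitz domain whose defining Lipschitz constant is bounded uniformly in $i$. Given a bounded Lipschitz domain, the definition already supplies a finite cover $B(x_1,r_1),\ldots,B(x_N,r_N)$ of $\partial\Omega$ with $B(x_j,r_j)\cap\Omega=B(x_j,r_j)\cap\Omega_j$, where each $\Omega_j$ is a rotation of a special Lipschitz domain defined by some Lipschitz function $h_j\colon\mathbb{R}^{d-1}\to\mathbb{R}$. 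I would pass to slightly smaller concentric balls $U_j:=B(x_j,r_j/2)$ and choose, by a compactness argument on $\partial\Omega$ (of the same flavor as in the proof of Lemma~\ref{lem:Lip-implies-CC}), a positive $\varepsilon$ so small that $\{x\in\mathbb{R}^d\colon\dist(x,\partial\Omega)<\varepsilon\}\subset\bigcup_j U_j$. Local finiteness is automatic from the finiteness of the cover, and the uniform Lipschitz bound is $M:=\max_j\mathrm{Lip}(h_j)$.

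Hence every hypothesis of \cite[Theorem VI.3.5]{Ste71} is met, and invoking that theorem directly supplies the desired bounded linear operator $\mathrm{ext}\colon W^s_p(\Omega)\to W^s_p(\mathbb{R}^d)$ with $\mathrm{ext}(f)\vert_\Omega=f$. There is no real obstacle beyond this dictionary between the two notions of boundary regularity; the nontrivial analytic work, namely Stein's construction of the extension via a regularized distance and a suitable integral operator, is imported from \cite{Ste71} as a black box.
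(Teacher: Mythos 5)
Your approach is exactly the paper's: cite Stein's extension theorem for minimally smooth domains and observe that a bounded Lipschitz domain in the sense used here satisfies Stein's definition; the paper states this inclusion without elaboration, and your dictionary check (shrinking the balls, compactness of $\partial\Omega$ to produce $\varepsilon$, finiteness giving local finiteness and a uniform Lipschitz bound) is the standard way to fill it in. One small exegetical slip: Stein's condition (i) is that for each $x\in\partial\Omega$ the \emph{whole} ball $B(x,\varepsilon)$ lies in a \emph{single} $U_i$, which is stronger than your paraphrase ``the union of the $U_i$ contains an $\varepsilon$-neighborhood of $\partial\Omega$''; your construction does verify the stronger version, so the argument is fine, but the paraphrase should be corrected.
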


We will also use the following result about optimal polynomial approximation on cubes.
It easily follows from the Bramble-Hilbert lemma or generalized Poincar\'e inequality by scaling, see \cite[Lemma~4.3.8]{BS08} or \cite[Lemma\,1.1.11]{M85}.

\begin{lemma}\label{lem:polyapp-sobolev}
For any $1\le p\le \infty$ and $s\in \IN$ with $s>d/p$,
there is a constant $c_\ast>0$ 
such that the following holds. 
For any $0< \varrho \le 1$, any 
$\ell^\infty$-ball $Q$ of radius $ \varrho $ 
and any $f\in W^s_p(Q)$, there is a polynomial $\pi$ of degree at most $s$
such that
\[
 \sup_{x\in Q}\vert(f - \pi)(x)\vert \le c_\ast \,  \varrho ^{s-d/p} \vert f \vert_{W_p^s(Q)}.
\]
\end{lemma}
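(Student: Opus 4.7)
The plan is to reduce to a reference cube by an affine change of variables and then invoke a Bramble-Hilbert-type estimate combined with a Sobolev embedding. I write any $\ell^\infty$-ball of radius $\varrho$ as $Q=x_0+\varrho Q_0$, where $Q_0=B_d^\infty(0,1)$ and $x_0$ is the center of $Q$, and set $g(y):=f(x_0+\varrho y)$ for $y\in Q_0$. By the chain rule, $D^\alpha g(y)=\varrho^{|\alpha|}(D^\alpha f)(x_0+\varrho y)$, and by the substitution $x=x_0+\varrho y$,
\[
\|D^\alpha g\|_{L_p(Q_0)}=\varrho^{|\alpha|-d/p}\|D^\alpha f\|_{L_p(Q)},
\]
so that in particular $|g|_{W_p^s(Q_0)}=\varrho^{s-d/p}|f|_{W_p^s(Q)}$. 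Moreover, if $\tilde\pi\in\mathcal{P}_s^d$ is a polynomial on $Q_0$, then $\pi(x):=\tilde\pi\bigl((x-x_0)/\varrho\bigr)$ is in $\mathcal{P}_s^d$ and satisfies $\sup_{x\in Q}|(f-\pi)(x)|=\sup_{y\in Q_0}|(g-\tilde\pi)(y)|$. Hence it suffices to prove the claim on the fixed reference cube $Q_0$, in which case the constant obtained depends only on $d$, $s$, $p$.

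On $Q_0$, the task is to find $\tilde\pi\in\mathcal{P}_{s-1}^d\subset \mathcal{P}_s^d$ with $\|g-\tilde\pi\|_{L_\infty(Q_0)}\lesssim |g|_{W_p^s(Q_0)}$. I would take $\tilde\pi$ to be the averaged Taylor polynomial of $g$ over (say) a ball compactly contained in $Q_0$, which is the choice underlying Lemma~1.1.11 of \cite{M85} cited in the statement. Two inputs are then combined: first, the standard Poincar\'e / Bramble-Hilbert inequality on the reference cube, which gives $\|g-\tilde\pi\|_{W_p^s(Q_0)}\lesssim |g|_{W_p^s(Q_0)}$ with a constant depending only on $d,s,p$; second, the Sobolev embedding $W_p^s(Q_0)\hookrightarrow L_\infty(Q_0)$, which is available precisely because we assume $s>d/p$. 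Chaining these two estimates yields $\|g-\tilde\pi\|_{L_\infty(Q_0)}\lesssim|g|_{W_p^s(Q_0)}$, and rescaling gives the claimed bound with $c_\ast$ depending only on $d,s,p$.

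The main technical point — really the only nontrivial one — is establishing the Bramble-Hilbert estimate on the reference cube with a polynomial whose degree is strictly less than $s$. The cleanest way, and the one implicit in the citation, is to take $\tilde\pi$ to be the Sobolev averaged Taylor polynomial and to estimate the remainder via the Riesz potential representation of $g-\tilde\pi$, which yields control of all derivatives of order $<s$ by $|g|_{W_p^s(Q_0)}$; the Sobolev embedding then upgrades $L_p$-control to $L_\infty$-control. The assumption $\varrho\le 1$ plays no role in the scaling argument itself (the identity $|g|_{W_p^s(Q_0)}=\varrho^{s-d/p}|f|_{W_p^s(Q)}$ is exact) and is simply the regime in which the estimate is applied later in the paper.
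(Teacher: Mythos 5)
Your proof is correct and is essentially the same as the paper's: both rescale to a fixed reference cube via the identity $|g|_{W_p^s(Q_0)}=\varrho^{s-d/p}|f|_{W_p^s(Q)}$, apply the polynomial-approximation result of Maz'ya (equivalently, a Bramble--Hilbert estimate for the averaged Taylor polynomial), and then use the Sobolev embedding $W_p^s\hookrightarrow C$ available since $s>d/p$. Your exposition is just a spelled-out version of the paper's one-line argument.
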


\begin{proof}
We use an affine map $T$ from $Q$ to $(-1,1)^d$, 
 on which we apply the continuous embedding of $W^s_p\bigl((-1,1)^d\bigr)$ into $C\bigl((-1,1)^d\bigr) $
 and the Bramble-Hilbert lemma
 to find constants $a,b>0$ 
 and a polynomial $\pi$ of degree at most $s$ with
 \begin{equation*}
    \sup_{y\in (-1,1)^d} \left|f\circ T^{-1}(y) - \pi(y)\right|
	\,\le\, a\, \|f\circ T^{-1} - \pi\|_{W^s_p((-1,1)^d)}
	\, \le \, a\,b\, |f\circ T^{-1}|_{W^s_p((-1,1)^d)}.
\end{equation*}
A change of variables gives
\[
    \sup_{x\in Q} \left|f(x) - \pi(Tx)\right|
    \,=\, \sup_{y\in (-1,1)^d} \left|f\circ T^{-1}(y) - \pi(y)\right|
\]
and
\[
	|f\circ T^{-1}|_{W^s_p((-1,1)^d)} \,=\, \varrho^{s-d/p} |f|_{W^s_p(Q)}.
\] 
\end{proof}

\section{The geometric characterization}
\label{sec:proof}

In this section, we give a proof of our main result, Theorem~\ref{thm:main-result-sobolev}. For the proof, let us fix $\Omega$ and $p,q,s$ as in the statement of Theorem~\ref{thm:main-result-sobolev}. 
All constants in this section are allowed to depend on 
these parameters. 
Let $ r \leq 1$ and $\theta\in (0,\pi/2)$ be such that $\Omega$ contains a ball of radius $r$ and  $\overline{\Omega}$
satisfies an interior cone condition with radius $ r $ and angle $\theta$, see Lemma~\ref{lem:convex-cone}. 
In Section~\ref{subsec:large-q}, we first prove the characterization in the simpler case $q\ge p$,
where only the largest hole amidst the point set matters.  The case $q<p$ is more involved and will be treated in Section~\ref{subsec:small-q}.  We then turn to numerical integration in Section~\ref{subsec:int}, discuss optimal algorithms for $p=2$ in Section~\ref{subsec:hilbert} and conclude with an example
in Section~\ref{subsec:example}.

\subsection{The case $q\ge p$}
\label{subsec:large-q}

Let us start with the upper bound.
We basically extend the proof of \cite[Proposition~21]{NT06}
to point sets with non-optimal covering radius. 
To this end, let $m>s$ be an integer and let $c_0,c_1,c_2>0$ be as in Lemma~\ref{lem:wendland}. 
We can assume that $ h_{P,\Omega}\le c_1  r $ as the upper bound is trivial if this does not hold.
We consider the linear algorithm
\[
 S_P\colon W_p^s(\Omega) \to L_q(\Omega), \quad S_P(f) = \sum_{x\in P} f(x) u_x,
\]
with $u_x$ provided by Lemma~\ref{lem:wendland} for the compact set $ \overline{\Omega} $.
The balls $B(y,h_{P,\Omega})$, $y\in \Omega$, 
cover the set $\overline{\Omega}$.
By compactness and Vitali's Lemma, 
there is a finite selection of pairwise disjoint balls 
$B_i=B(y_i,h_{P,\Omega})$ with $y_i\in \Omega$ 
 and $i\le N$ 
such that the balls $3B_i=B(y_i,3h_{P,\Omega})$ still cover $\overline{\Omega}$.
\medskip

Let $f\in W_p^s(\Omega)$ with $\Vert f \Vert_{W_p^s(\Omega)} \le 1$.
By Lemma~\ref{lem:extension-sobolev}, 
we may assume that $f\in W_p^s(\IR^d)$ with $\Vert f \Vert_{W_p^s(\IR^d)} \le c_3$.
Let $Q_i$ be the$\ell^\infty$-ball with center $y_i$ and radius $(3+c_2)h_{P,\Omega}$.
By Lemma~\ref{lem:polyapp-sobolev}, there are polynomials $\pi_i$ 
of degree at most $s$ such that
\begin{equation*}
 \sup_{y \in Q_i} \big\vert (f-\pi_i)(y)\big\vert \le c_4\, h_{P,\Omega}^{s-d/p} \vert f \vert_{W_p^s(Q_i)}.
\end{equation*}
For each $y\in \Omega_i:= 3B_i \cap \Omega$, we note that $u_x(y)=0$ for $x\not\in Q_i$, and obtain
\begin{align}
\nonumber
 \big\vert (f - S_P f)(y) \big\vert &= \big\vert (f - \pi_i)(y) - S_P(f-\pi_i)(y) \big\vert \\
 \label{eq:polyapp}
 &\le \big\vert (f- \pi_i) (y)\big\vert + \Big\vert \sum_{x\in P} (f-\pi_i)(x) u_x(y) \Big\vert 
 \le c_5\, h_{P,\Omega}^{s-d/p} \vert f \vert_{W_p^s(Q_i)},
\end{align}
where $c_5=(1+c_0) c_4$.
In particular, this yields
\[
 \Vert f - S_P(f) \Vert_{L_\infty(\Omega)} 
 \le c_5\,  h_{P,\Omega}^{s-d/p} \vert f \vert_{W_p^s(\IR^d)},
\]
which proves the case $q=\infty$.
For $p \le q<\infty$, we use $ \Omega_i\subset Q_i $ and \eqref{eq:polyapp} to get
\begin{multline*}
 \Vert f - S_P(f) \Vert_{L_q(\Omega)}^q 
 \le \sum_{i=1}^N \int_{\Omega_i} \big\vert \big(f - S_P(f)\big)(y) \big\vert^q {\rm d} y
 \le c_5^q\sum_{i=1}^N \, h_{P,\Omega}^{sq-dq/p} \vert f \vert_{W_p^s(Q_i)}^q \vol(\Omega_i) \\
 \le c_6\, h_{P,\Omega}^{sq-dq/p+d} \sum_{i=1}^N \vert f \vert_{W_p^s(Q_i)}^q
 \le c_6\, h_{P,\Omega}^{sq-dq/p+d} \Big(\sum_{i=1}^N \vert f \vert_{W_p^s(Q_i)}^p\Big)^{q/p}.
\end{multline*}
Note that, independently of $ P $, every $x\in \IR^d$ is contained in at most $ c_7\in \mathbb{N} $ of the $ N $ cubes $Q_i$ since 
the balls $B_i$ are disjoint and all have the same radius.
Therefore,
\begin{equation}\label{eq:norms-efficient}
 \sum_{i=1}^N \vert f \vert_{W_p^s(Q_i)}^p
 = \sum_{\vert \alpha \vert =s} \int_{\IR^d} \Big( \vert D^\alpha f(x) \vert^p \sum_{i=1}^N  \mathbf{1}_{Q_i}(x)\Big) {\rm d} x
 \le c_7\, \vert f \vert_{W_p^s(\IR^d)}^p \le c_7 c_3^p
\end{equation}
and we arrive at the desired inequality. 
\medskip
	
For the lower bound we use the well-known technique of \emph{fooling functions}:
We construct a function $f_\ast$ from the unit ball of $W_p^s(\Omega)$
which vanishes on $P$ and has a large $L_q$-norm.
Since any algorithm of the form~\eqref{eq:sampling_operator}
cannot distinguish $f_\ast$ from $-f_\ast$, i.e., it satisfies $S_P(f_*)=S_P(-f_*)$,
we have
	\begin{equation}\label{eq:fooling}
	\begin{split}
	 e\big(P,W_p^s(\Omega)&\hookrightarrow L_q(\Omega)\big) 
	 \,=\, \inf_{S_P} \sup_{\|f\|_{W^s_p(\Omega)}\le 1} \Vert S_P(f) - f \Vert_{L_q(\Omega)}\\
	 &\ge\, \inf_{u\in L_q(\Omega)}\, \max\left\{ \Vert u - f_* \Vert_{L_q(\Omega)}, \Vert u + f_* \Vert_{L_q(\Omega)} \right\}
	  \ge\, \|f_\ast\|_{L_q(\Omega)}.
\end{split}
\end{equation}
In the case $q\ge p$ it is enough to consider a function $f_\ast$
which is supported in the largest hole, compare \cite[Theorem~11]{HKNPUsurvey}
for $\Omega=[0,1]^d$. Let therefore $ x_0\in \Omega $ be such that
$ \dist(x_0,P)\ge h := \min\{ r ,\frac 12 h_{P,\Omega}\} $. 
Then the ball $ B(x_0,h)$ does not contain any point of $ P $. By Lemma \ref{lem:ballinball} we find a ball $B(y, \varrho)$ with $ \varrho := c_\theta h, $ which is contained in $ \Omega\cap B(x_0,h) $. 
 We take a smooth non-negative function $ \varphi $ supported in $ B(0,1) $ with $ \varphi(0)=1 $ and consider $ f(x)= \varphi\big(\varrho^{-1}(x-y)\big)$. 
One can easily compute the 
scaling properties
\begin{equation}\label{eq:scaling-LB}
	\|f\|_{L_q(\Omega)}\asymp \varrho^{d/q}\qquad 
	\text{and} \qquad 	\|f\|_{W^s_p(\Omega)} \ls \varrho^{-s+d/p},
\end{equation}
taking into account that $\varrho \le 1$ and $s>d/p$. This yields that $ f_*:=\|f\|_{W^s_p(\Omega)}^{-1} f$ satisfies 
	\[ 
	\|f_*\|_{W^s_p(\Omega)}\le 1, \quad 
	f_*|_P=0
  \quad\text{and}\quad
  \|f_*\|_{L_q(\Omega)} \gs h_{P,\Omega}^{s-d(1/p-1/q)}, 
	\]
	which completes the proof. 
	$\hfill \square$
	
\subsection{The case $q<p$}
\label{subsec:small-q}

We start with the upper bound,
using the following approximation property. 
As this useful result is somewhat technical, let us paraphrase it with ``Locally good point sets yield locally good approximations". 

\begin{lemma}
\label{lem:localestimate-extended}
Let $\Omega\subset \mathbb{R}^d$ be a bounded convex domain containing a ball of radius $r\in (0,1]$ and let $m\in\IN$.
There is a constant $c\in(0,1)$, depending solely on $m$ and $\Omega$ such that for any cube $Q=B_d^{\infty}(x,\varrho)\subset\IR^d$ of radius $0< \varrho \le  r $ centered at $x\in\overline{\Omega}$
 and any finite point set $P\subset \Omega$ with
\begin{equation}\label{assumption:good-cubes}
 \sup_{y\in \Omega\cap Q} \dist(y,P) \le c  \varrho ,
\end{equation}
there are continuous functions $u_x\colon \Omega\cap Q\to \IR$, $x\in P$, with the following property.
For any $s\in \{1,\hdots,m\}$ and $1\le p\le \infty$,
there is a constant $C>0$ depending only on $\Omega,s$ and $p$, such that, for all $f\in W^s_{p}(\IR^d)$, we have
\[
\sup_{y\in \Omega\cap Q} \Big\vert f(y) - \sum_{x\in P} f(x) u_x(y) \Big\vert
\le C  \varrho ^{s-d/p}|f|_{W^s_{p}(Q)}.
\]
\end{lemma}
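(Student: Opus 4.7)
The plan is to apply Wendland's Lemma~\ref{lem:wendland} with $m = s$ on the compact set $K := \overline{\Omega \cap Q}$, which by Lemma~\ref{lem:local-CC} satisfies an interior cone condition with height $c_\theta \varrho$ and angle $\theta'$ depending only on $d$ and $\theta$; let $c_0, c_1, c_2$ be the corresponding Wendland constants. Using the resulting continuous functions $u_x \colon K \to \mathbb{R}$ for $x \in \tilde P := P \cap K$, extended by $u_x \equiv 0$ on $P \setminus \tilde P$, the error estimate will follow from polynomial reproduction combined with the Sobolev polynomial approximation of Lemma~\ref{lem:polyapp-sobolev}.

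The main technical step, and the main obstacle, is verifying that $h_{\tilde P, K} \le c_1 c_\theta \varrho$ so that Wendland's Lemma applies. The difficulty is that assumption~\eqref{assumption:good-cubes} controls $\dist(y, P)$ only for $y \in \Omega \cap Q$, and the nearest point of $P$ to such a $y$ may lie outside $K$, just past the boundary of $Q$. The fix is to use the cone condition of $K$ to retract into its interior: for any $y \in K$, apply Lemma~\ref{lem:ballinball} to $K$ with radius $R := c\varrho / c_{\theta'}$ (which is at most $c_\theta \varrho$ once $c \le c_\theta c_{\theta'}$) to obtain a ball $B(y', c_{\theta'} R) \subset B(y, R) \cap K$. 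Since $\dist(\cdot, P)$ is continuous and $\Omega \cap Q$ is dense in $K$, we have $\dist(y', P) \le c\varrho$, so there is some $z \in P$ with $\|y' - z\| \le c\varrho = c_{\theta'} R$; because $K$ is closed and contains $B(y', c_{\theta'}R)$, this forces $z \in K$ and hence $z \in \tilde P$. Consequently $\|y - z\| \le (1 + 1/c_{\theta'})\, c\varrho$, so choosing $c$ small enough that $(1 + 1/c_{\theta'}) c \le c_1 c_\theta$ (and $c \le c_\theta c_{\theta'}$) secures Wendland's hypothesis.

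With the $u_x$ in hand, apply Lemma~\ref{lem:polyapp-sobolev} on $Q$ (whose radius $\varrho \le r \le 1$) to obtain a polynomial $\pi \in \mathcal{P}_s^d$ with $\sup_{z \in Q} |f(z) - \pi(z)| \le c_\ast \varrho^{s - d/p} |f|_{W^s_p(Q)}$; note that $f$ is continuous by Sobolev embedding. For $y \in \Omega \cap Q \subset K$, polynomial reproduction $\pi(y) = \sum_{x \in \tilde P} \pi(x) u_x(y)$ together with $\tilde P \subset \overline{Q}$ and $\sum_{x \in \tilde P} |u_x(y)| \le c_0$ gives
\[
\Big| f(y) - \sum_{x \in P} f(x) u_x(y) \Big| = \Big| (f - \pi)(y) - \sum_{x \in \tilde P}(f - \pi)(x)\, u_x(y) \Big| \le (1 + c_0)\, c_\ast\, \varrho^{s-d/p}\, |f|_{W^s_p(Q)},
\]
which is the claimed estimate with $C = (1 + c_0)\, c_\ast$.
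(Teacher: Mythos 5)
Your proposal is correct and follows essentially the same route as the paper: apply Lemma~\ref{lem:local-CC} to get a local cone condition on $\overline{\Omega\cap Q}$, verify the covering-radius hypothesis of Wendland's Lemma~\ref{lem:wendland} by retracting into the interior via Lemma~\ref{lem:ballinball}, and then combine polynomial reproduction with the local polynomial approximation of Lemma~\ref{lem:polyapp-sobolev}. The bookkeeping differs only cosmetically: you parametrize the retraction radius as $R=c\varrho/c_{\theta'}$ and derive the constraint $(1+1/c_{\theta'})c\le c_1c_\theta$, whereas the paper fixes $c=c_\theta c_{\theta'}c_1/2$ and checks the same inequality directly; you also take $m=s$ rather than the paper's $m=\lfloor s\rfloor+1$, both of which suffice here since $s\in\IN$ and the approximating polynomial has degree at most $s$.
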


\smallskip

\begin{proof}
By Lemma~\ref{lem:local-CC}, the set $\overline{\Omega\cap Q}$ satisfies an interior cone condition
with radius $ \varrho '=c_\theta  \varrho $ and angle $\theta'$ for some $c_{\theta}>0$. Let $c_0,c_1>0$ be as in Lemma~\ref{lem:wendland} for the parameters $\theta'$ and $m$. 
Note that $c_1$ can be chosen to be smaller than one. 
We set $c=c_{\theta} c_{\theta'} c_1/2$ with $c_{\theta'}$ as in Lemma~\ref{lem:ballincone}.
Then assumption \eqref{assumption:good-cubes} implies that
every ball $B(y,2c \varrho )$, $y\in Q\cap \Omega$, contains a point of $P$. 
By Lemma \ref{lem:ballinball} every ball $ B(x,2c_{\theta'}^{-1}c \varrho ), x\in Q\cap\Omega, 
$ contains a ball $ B(y,2c \varrho )\subset Q\cap \Omega $ 
and therefore a point of $P\cap Q$.
We thus have
\[
h_{P\cap Q,\Omega\cap Q} \leq 2c_{\theta'}^{-1} c  \varrho  = c_1  \varrho '.
\]
Thus, we may apply Lemma~\ref{lem:wendland} to the point set $P\cap Q$
within the compact set $\overline{\Omega\cap Q}$ and obtain
continuous functions $u_x\colon \Omega \cap Q \to \mathbb{R}$ for $x\in P$ with 
\[
 \sum_{x\in P} |u_x(y)|\leq c_0 \qquad\text{and}\qquad \pi(y)=\sum_{x\in P} \pi(x) u_x(y)
 \] 
for all $\pi\in\mathcal{P}_m^d$ and $y\in \Omega\cap Q$, where we set $u_x=0$ for $x \in P\setminus Q$.
For any $f\in W^s_{p}(\IR^d)$, 
by Lemma \ref{lem:polyapp-sobolev} there is a polynomial $\pi\in \mathcal{P}_m^d$ with
\[
\sup_{y\in Q} \big\vert (f-\pi)(y)\big\vert 
\,\le\, c_\ast  \varrho ^{s-d/p}|f|_{W^s_{p}(Q)}.
\]
Similar to Section \ref{subsec:large-q}, we get for all $y\in \Omega\cap Q$ that
\begin{multline*}
\Big\vert f(y) - \sum_{x\in P} f(x) u_x(y) \Big\vert
 = \Big\vert (f-\pi)(y) - \sum_{x\in P} (f-\pi)(x) u_{x}(y) \Big\vert\\
 \,\le\, \big\vert (f-\pi)(y) \big\vert + c_0\, \max_{x\in P\cap Q} \big\vert (f-\pi)(x) \big\vert
 \,\le\, (1+c_0)c_\ast\,  \varrho ^{s-d/p}|f|_{W^s_{p}(Q)},
\end{multline*}
as it was to be proven.
\end{proof}

In the following, we say that a cube $Q$
is a \emph{good cube} if it satisfies the conditions of Lemma~\ref{lem:localestimate-extended}. We now define good cubes $Q_P(x)$ for all $x\in\overline\Omega$. Without loss of generality, we may assume that
for any $x\in \overline{\Omega}$ 
there is some $ \varrho \in(0, r )$ 
such that
\begin{equation}\label{eq:goodradius}
 \sup_{y\in \Omega\cap B_d^\infty(x, \varrho )} \dist(y,P) < c  \varrho ,
\end{equation}
where $c$ is the constant from Lemma~\ref{lem:localestimate-extended}.
Otherwise, $\dist(\cdot,P)$ is larger than a constant
on a set of constant volume and the upper bound
of Theorem~\ref{thm:main-result-sobolev}\,(a) becomes trivial.
We define $r_P(x)$ to be the infimum 
over all $ \varrho \in(0, r )$ satisfying \eqref{eq:goodradius}
and let 
\begin{equation}\label{def:good-cube}
Q_P(x):=B_d^\infty\big(x,r_P(x)\big).
\end{equation}
Our definition ensures that $Q_P(x)$ is indeed a good cube. We will now show
that there is a covering of the domain by such cubes that is efficient in the sense that every point is 
covered only by a constant number of cubes.
For this, we need the following observation.

\begin{lemma}\label{lem:semi-cont}
The function $r_P\colon \overline{\Omega}\to (0,\infty)$ is upper semi-continuous.
\end{lemma}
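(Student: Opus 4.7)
The plan is to verify upper semi-continuity by showing that for each $t>0$ the sublevel set $\{x\in \overline{\Omega}\colon r_P(x)<t\}$ is relatively open in $\overline{\Omega}$. Fix $x_0\in\overline{\Omega}$ with $r_P(x_0)<t$. Since $r_P(x_0)$ is defined as an infimum of radii $\varrho\in(0,r)$ satisfying the \emph{strict} inequality \eqref{eq:goodradius}, I can pick a witness $\varrho^\ast\in\bigl(r_P(x_0),\min(t,r)\bigr)$ with
\[
\sup_{y\in \Omega\cap B_d^\infty(x_0,\varrho^\ast)}\dist(y,P)\,<\,c\varrho^\ast,
\]
and record the amount of slack $\alpha:=c\varrho^\ast-\sup_{y\in \Omega\cap B_d^\infty(x_0,\varrho^\ast)}\dist(y,P)>0$. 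The use of strict inequality in the definition of $r_P$ is precisely what provides this positive slack; this is the one place where the argument is delicate.

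The key geometric observation is a simple inclusion of $\ell^\infty$-cubes: for any $\delta\in(0,\varrho^\ast)$ and any $x\in\overline{\Omega}$ with $\|x-x_0\|_\infty<\delta$, the triangle inequality gives
\[
B_d^\infty(x,\varrho^\ast-\delta)\,\subset\, B_d^\infty(x_0,\varrho^\ast).
\]
Intersecting with $\Omega$ and taking the supremum of $\dist(\cdot,P)$ preserves the inclusion, so
\[
\sup_{y\in \Omega\cap B_d^\infty(x,\varrho^\ast-\delta)}\dist(y,P)\,\le\, c\varrho^\ast-\alpha.
\]

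To conclude, I want the right-hand side to be strictly less than $c(\varrho^\ast-\delta)$, which amounts to $c\delta<\alpha$. Choosing any $\delta$ with $0<\delta<\min\{\alpha/c,\,\varrho^\ast\}$, I obtain that for every $x\in\overline{\Omega}$ in the open $\ell^\infty$-ball of radius $\delta$ around $x_0$, the radius $\varrho:=\varrho^\ast-\delta\in(0,r)$ satisfies \eqref{eq:goodradius}, hence $r_P(x)\le\varrho^\ast-\delta<\varrho^\ast<t$. This shows that $\{r_P<t\}$ contains a neighborhood of $x_0$, completing the proof. There is no real obstacle beyond correctly exploiting the strict inequality in \eqref{eq:goodradius}; once the slack $\alpha$ is identified, the rest is just an elementary triangle-inequality inclusion.
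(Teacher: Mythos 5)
Your upper semi-continuity argument is correct and is, in substance, the same as the paper's: both pick a witness radius $\varrho$ slightly above $r_P(x_0)$ for which \eqref{eq:goodradius} holds with strict slack, use the elementary cube inclusion $B_d^\infty(x,\varrho-\delta)\subset B_d^\infty(x_0,\varrho)$ valid for $\|x-x_0\|_\infty<\delta$, and then shrink $\delta$ until the slack $\alpha$ dominates the decrease $c\delta$ of the right-hand side. Phrasing this as openness of the sublevel sets $\{r_P<t\}$ rather than as the paper's equivalent $\varepsilon$--$\delta$ argument is a purely cosmetic difference.

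There is, however, one piece of the lemma that your proposal does not address: the statement also asserts that $r_P$ maps $\overline\Omega$ into $(0,\infty)$, and the paper proves $r_P(x)>0$ before turning to semi-continuity. That argument is short --- for $\varrho>0$ small enough the cube $B_d^\infty(x,\varrho)$ contains no point of $P$ other than possibly $x$, and since $c<1$ the supremum of $\dist(\cdot,P)$ over $\Omega\cap B_d^\infty(x,\varrho)$ cannot be smaller than $c\varrho$, so \eqref{eq:goodradius} fails for every small $\varrho$ --- but it is not automatic, and it is genuinely used downstream: the termination argument in the proof of Lemma~\ref{lem:efficient-covering} relies on $r(y_0)>0$ at the accumulation point $y_0$. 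You should add this positivity step to make the proof of the lemma complete.
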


\begin{proof}
Let $x\in\overline{\Omega}$. We first show $r_P(x)>0$. If $ \varrho >0$ is small enough,
then $B_d^\infty(x, \varrho )$ does not contain any point of $P$ except possibly $x$.
Since $c<1$, relation \eqref{eq:goodradius} cannot be satisfied for these values of $ \varrho $ and thus
$r_P(x)$ is positive.
To show the semi-continuity, let $\varepsilon>0$. Then there is some $ \varrho  \in [r_P(x),r_P(x)+\varepsilon]$ such that \eqref{eq:goodradius} is satisfied. Clearly, we can choose $\delta>0$ such that \eqref{eq:goodradius} is still satisfied if we replace the right hand side
by $ca$ with $a= \varrho  - \delta$.
For any $\tilde x\in \overline{\Omega}$ with $\Vert \tilde x - x \Vert_\infty<\delta$,
the cube $B_d^\infty(\tilde x,a)$ is contained in $B_d^\infty(x, \varrho )$.
We thus have
\[
 \sup_{y\in B_d^\infty(\tilde x,a) \cap \Omega} \dist(y,P)
 \le \sup_{y\in B_d^\infty(x, \varrho ) \cap \Omega} \dist(y,P) 
 < c a,
\]
which implies $r_P(\tilde x)\le a < r_P(x)+\varepsilon$.
\end{proof}

The semi-continuity is already enough to obtain the desired covering. Let us note that one can extract a suitable covering also from a Besicovitch-type covering result by de Guzman \cite[Theorem~1.1]{DeG75}.

\begin{lemma}\label{lem:efficient-covering}
 Let $K\subset\IR^d$ be compact and let $r\colon K\to(0,\infty)$
 be upper semi-continuous.
 If $K$ is nonempty, then there are points $y_1,\hdots,y_N\in K$ such that
 \begin{enumerate}[label={\emph{(\roman*)}}]
  \item\label{property:covering} The cubes $Q_i=B_d^\infty\big(y_i,r(y_i)\big)$, $1\le i \le N$, cover $K$.
  \item\label{property:disjoint} The cubes $Q_i/2$ are pairwise disjoint.
  \item\label{property:efficiency} Every $y\in \IR^d$ is contained in at most $2^d$ of the cubes $Q_i$.
 \end{enumerate}
\end{lemma}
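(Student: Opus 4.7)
The plan is to build the points $y_1,\ldots,y_N$ one at a time by a greedy procedure that, at each step, picks a point in the still-uncovered portion of $K$ where $r$ is largest. Concretely, set $K_0=K$ and, having chosen $y_1,\ldots,y_k$, put
\[
K_k:=K\setminus \bigcup_{j\le k} Q_j,
\]
which is closed in the compact $K$ (the $Q_j$ are open) and hence compact. If $K_k=\emptyset$, stop; otherwise select $y_{k+1}\in K_k$ with $r(y_{k+1})=\max_{y\in K_k} r(y)$. This maximum is attained because $r$ is upper semi-continuous on the compact set $K_k$ (the super-level sets $\{r\ge\alpha\}$ are closed).

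Property \emph{(i)} is then immediate from the stopping condition. For \emph{(ii)}, suppose $j<k$. Since $k-1\ge j$ we have $y_k\in K_{k-1}\subset K_j\subset K\setminus Q_j$, so $\|y_k-y_j\|_\infty\ge r(y_j)$. Moreover $y_k\in K_{k-1}\subset K_{j-1}$ while $y_j$ maximized $r$ on $K_{j-1}$, so $r(y_k)\le r(y_j)$. Combining,
\[
\tfrac{1}{2}r(y_k)+\tfrac{1}{2}r(y_j)\le r(y_j)\le \|y_k-y_j\|_\infty,
\]
which is exactly $(Q_k/2)\cap(Q_j/2)=\emptyset$. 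For \emph{(iii)}, suppose $y\in Q_{i_1}\cap\cdots\cap Q_{i_m}$ with $i_1<\cdots<i_m$, and classify each $i_a$ by the sign pattern $\sigma(i_a)\in\{-,+\}^d$ defined coordinatewise by $\sigma(i_a)_\ell=+$ iff $(y_{i_a}-y)_\ell\ge 0$. There are $2^d$ patterns, so it suffices to show at most one index per pattern. If $i_a<i_b$ shared a pattern, then for every coordinate $\ell$ the reals $(y_{i_a}-y)_\ell$ and $(y_{i_b}-y)_\ell$ have a common sign, giving
\[
\bigl|(y_{i_a}-y_{i_b})_\ell\bigr|\le \max\bigl(|(y_{i_a}-y)_\ell|,|(y_{i_b}-y)_\ell|\bigr)<\max\bigl(r(y_{i_a}),r(y_{i_b})\bigr)=r(y_{i_a}),
\]
where the last equality uses the monotonicity $r(y_{i_a})\ge r(y_{i_b})$ established above. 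Hence $\|y_{i_a}-y_{i_b}\|_\infty<r(y_{i_a})$, i.e.\ $y_{i_b}\in Q_{i_a}$, which contradicts the argument for \emph{(ii)}.

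The main obstacle I expect is proving that the greedy algorithm terminates after finitely many steps. I would argue by contradiction: if an infinite sequence arose, the reasoning for \emph{(ii)} still produces pairwise disjoint cubes $Q_k/2$, all of which sit in a fixed bounded neighborhood of $K$; comparing volumes forces $r(y_k)\to 0$. By compactness extract a subsequence $y_{k_l}\to y^*\in K$; since $r(y^*)>0$, two cases arise. If $y^*\in K_k$ for every $k$, then by maximality $r(y_{k+1})\ge r(y^*)>0$ for all $k$, contradicting $r(y_k)\to 0$. Otherwise $y^*\in Q_j$ for some $j$; as $Q_j$ is open and $y_{k_l}\to y^*$, we get $y_{k_l}\in Q_j$ for large $l$, contradicting $y_{k_l}\in K_{k_l-1}\subset K\setminus Q_j$. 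This delicate use of upper semi-continuity together with the openness of the cubes is the only non-routine step.
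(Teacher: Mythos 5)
Your proof is correct and follows essentially the same approach as the paper's: a greedy Vitali-type selection of maximizers of $r$ on the remaining uncovered compact set, a volume argument combined with openness of the cubes and upper semi-continuity to show termination, and the orthant sign-pattern argument to obtain the $2^d$ bound. The only differences are cosmetic reorganizations (e.g.\ stating the disjointness inequality directly rather than by contradiction, and splitting the termination argument into an explicit dichotomy).
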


\begin{proof}
 By compactness of $K$, 
 we can choose $y_1$ as a maximizer of $r_P$ on $K$.
 We set $r_1=r(y_1)$ and $Q_1=B_d^\infty\bigl(y_1,r(y_1)\bigr)$.
 Recursively, we can choose $y_i$ as a maximizer of $r_P$ 
 on $K\setminus \bigcup_{j<i} Q_j$ (which is also compact)
 and set $r_i=r(y_i)$ and $Q_i=B_d^\infty(y_i,r_i)$,
 as long as the set difference is nonempty.
 We first show that the sets $Q_i^\prime=B_d^\infty(y_i,r_i/2)$ 
 are pairwise disjoint. Assume that $y\in Q_i^\prime\cap Q_j^\prime$
 for some $j<i$. Then we have
 \[
  \Vert y_j - y_i \Vert_\infty \le \Vert y_j - y \Vert_\infty + \Vert y - y_i \Vert_\infty
  < r_j/2 + r_i/2
  \le r_j,
 \]
 since the numbers $r_i$ are nonincreasing,
 and thus $y_i \in Q_j$. A contradiction.
 
 We now show that our procedure terminates after finitely many steps.
 Assume for a contradiction that we obtain a whole sequence $(y_i)_{i\in\IN}$.
 Then we must have $r_i\to 0$, since otherwise we would have infinitely many
 disjoint cubes $Q_i^\prime$
 with volume larger than a constant. There is a subsequence $(y_{i_k})_{k\in\IN}$
 that converges to some $y_0\in K$. By assumption, $r_0=r(y_0)>0$. Since $r_i\to 0$, there is some $k\in \IN$ with $r_k<r_0$.
 As $r_k$ is a maximum of $r$ on $K\setminus\bigcup_{\ell<k} Q_\ell$,
 the point $y_0$ must be contained in $Q_\ell$ for some $\ell < k$. Since $Q_\ell$ is open and $y_{i_k}\to y_0$,
 we get that $y_{i_k} \in Q_\ell$ for all sufficiently large $k$. But by our construction, we have
 $y_i\not\in Q_\ell$ for all $i>\ell$. A contradiction.
 Thus, the procedure terminates and we obtain a finite point set 
 $y_1,\hdots,y_N$, satisfying \ref{property:covering} and \ref{property:disjoint}.
 
 To prove the efficiency of the covering, 
 let now $y\in \IR^d$ with $y\in Q_i \cap Q_j$ for some $j<i$.
 Assume that the signs of $y_i-y$ and $y_j-y$ are the same
 (i.e., $y_i$ and $y_j$ are in the same orthant with respect to $y$).
 Then we have
 \[
  \Vert y_j - y_i \Vert_\infty
  = \Vert (y_j - y) - (y_i -y) \Vert_\infty
  \le \max\left\{\Vert y_j - y \Vert_\infty, \Vert y_i - y \Vert_\infty \right\}
  < r_j
 \]
 and thus $y_i \in Q_j$, a contradiction.
 This means that the signs of all $y_j-y$ for which $y\in Q_j$
 must be different -- and there are at most $2^d$ different signs. 
\end{proof}

Lemma~\ref{lem:semi-cont} and Lemma~\ref{lem:efficient-covering} immediately yield an efficient covering of $\overline \Omega$ by good cubes.
It has the following properties.

\begin{proposition}\label{prop:the-covering}
Let $\Omega\subset \mathbb{R}^d$ be a bounded convex domain containing a ball of radius $r\in (0,1]$ and let $m\in\IN$.
For any finite and nonempty point set $ P\subset \Omega $, 
there are $y_1,\hdots,y_N\in\overline \Omega$ 
such that the good cubes $Q_i=Q_P(y_i)$ with radii $r_i=r_P(y_i)$ from \eqref{def:good-cube} cover $\overline \Omega$ and every $y\in \IR	^d$ is contained in at most $2^d$ of these cubes.
Each of the cubes $Q_i$ contains a ball $B_i$ of radius $d_i \asymp r_i$
such that these balls are pairwise disjoint subsets of $\Omega$ and empty of $P$.
\end{proposition}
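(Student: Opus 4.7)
My plan is to apply Lemma~\ref{lem:efficient-covering} to produce the covering and then separately extract the empty balls, using in an essential way that each $r_P(y_i)$ is the infimum of radii for which condition \eqref{eq:goodradius} holds, so that for any strictly smaller radius the condition must fail.

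First I would verify the hypotheses of Lemma~\ref{lem:efficient-covering} for the compact set $K=\overline{\Omega}$ and the function $r_P$: positivity of $r_P$ is established inside the proof of Lemma~\ref{lem:semi-cont}, and upper semi-continuity is Lemma~\ref{lem:semi-cont} itself. This immediately delivers points $y_1,\ldots,y_N\in\overline{\Omega}$ and cubes $Q_i=B_d^\infty(y_i,r_i)$ with $r_i=r_P(y_i)$ that cover $\overline{\Omega}$, satisfy the multiplicity bound by $2^d$, and, crucially for the disjointness of the $B_i$ below, have pairwise disjoint half-cubes $\tfrac12 Q_i=B_d^\infty(y_i,r_i/2)$. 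By the very definition of $r_P(y_i)$ each $Q_i$ is a good cube.

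Next I would construct $B_i$. Fix $i$ and observe that $r_i/4<r_i=r_P(y_i)$, so $\varrho=r_i/4$ lies strictly below the infimum and \eqref{eq:goodradius} must fail there:
\[
\sup_{y\in\Omega\cap B_d^\infty(y_i,r_i/4)}\dist(y,P)\,\ge\,c\,r_i/4.
\]
By definition of supremum I can pick $z_i\in\Omega\cap B_d^\infty(y_i,r_i/4)$ with $\dist(z_i,P)>cr_i/8$, so the Euclidean ball $B(z_i,cr_i/8)$ contains no point of $P$. Using $c<1$ and $\|\cdot\|_\infty\le\|\cdot\|_2$, a short calculation gives $B(z_i,cr_i/8)\subset\tfrac12 Q_i$. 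Finally, since $cr_i/8\le r$ and $z_i\in\Omega$, Lemma~\ref{lem:ballinball} furnishes a Euclidean ball $B_i$ of radius $d_i=c_\theta cr_i/8\asymp r_i$ with
\[
B_i\subset\Omega\cap B(z_i,cr_i/8)\subset\Omega\cap\tfrac12 Q_i.
\]
The ball $B_i$ inherits emptiness of $P$, and mutual disjointness of the $B_i$ follows from that of the half-cubes.

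The main, and rather minor, obstacle is bookkeeping: choosing the auxiliary radius $r_i/4$ small enough that the resulting Euclidean ball fits inside $\tfrac12 Q_i$ (to inherit the disjointness), while leaving enough room for Lemma~\ref{lem:ballinball} to apply, and then keeping the three radii $r_i/4$, $cr_i/8$, and $c_\theta cr_i/8$ all comparable to $r_i$. No tool beyond what has already been assembled in Section~\ref{sec:preliminaries} is required.
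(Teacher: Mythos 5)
Your proof is correct and follows essentially the same route as the paper's: invoke Lemmas~\ref{lem:semi-cont} and~\ref{lem:efficient-covering} for the covering, use the minimality of $r_i=r_P(y_i)$ (failure of~\eqref{eq:goodradius} at $\varrho=r_i/4$) to locate $z_i\in\Omega\cap Q_i/4$ with $\dist(z_i,P)\gs r_i$, and then apply Lemma~\ref{lem:ballinball} inside $B(z_i,cr_i/8)\subset Q_i/2$ to get the pairwise disjoint empty balls. You make the containment $B(z_i,cr_i/8)\subset Q_i/2$ and the choice of radii more explicit, but the argument is the same.
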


\begin{proof}
With Lemma~\ref{lem:semi-cont} and Lemma~\ref{lem:efficient-covering}
we immediately get an efficient covering by good cubes $Q_i$
such that the cubes $Q_i/2$ are disjoint.
By the minimality of $r_i$, the set $Q_i/4\cap\Omega$ contains 
a point $z_i$ with $\dist(z_i,P) \ge c r_i/8$, where 
$c$ is as in Lemma~\ref{lem:localestimate-extended}. 
By Lemma~\ref{lem:ballinball}, we obtain a ball $B_i$ with radius 
$c_\theta c r_i/8$ that is contained in $\Omega\cap B(z_i,cr_i/8)$
and therefore does not contain a point of $P$. 
In particular, the balls $B_i$ are contained in $Q_i/2$ and are therefore pairwise disjoint.
\end{proof}

We are ready to complete the proof of the upper bound of Theorem~\ref{thm:main-result-sobolev}\,(a) 
for $q< p$.

\begin{proof}
Let $m\ge s$ and $y_i$, $r_i$, $d_i$, $Q_i$ and $B_i$ for $1\le i \le N$ 
 as in Proposition~\ref{prop:the-covering}.
 Let
 \[
 \Omega_i=(Q_i\cap\Omega)\setminus \bigcup_{j<i} \Omega_j
 \]
 for all $1 \le i\le N$. That is, $\Omega$ is the disjoint union
 of the sets $\Omega_i\subset Q_i$.
 For $x\in P$ and $y\in \Omega_i$, we define $u_x(y)$
 according to Lemma~\ref{lem:localestimate-extended},
 applied to the cube $Q_i$. 
 This yields bounded functions $u_x\colon \Omega\to \IR$
 and a linear algorithm 
\begin{equation}\label{eq:final-alg}
  S_P\colon W^s_{p}(\Omega) \to L_q(\Omega), \quad S_P(f)=\sum_{x\in P} f(x) u_x.
 \end{equation} 
 Let 
 $f\in W^s_{p}(\Omega)$ with
 $\Vert f\Vert_{W^s_{p}(\Omega)}\le 1$.
 Using Lemma~\ref{lem:extension-sobolev} we may assume that $f\in W^s_{p}(\IR^d)$ with
 $\|f\|_{W^s_{p}(\IR^d)}\le c_3$.
 By Lemma~\ref{lem:localestimate-extended}
 we have
 \begin{equation*}
 \Vert f - S_P(f) \Vert_{L_q(\Omega_i)}^q 
 \le c_4  r_i^{(s-d/p)q+d} |f|_{W^s_{p}(Q_i)}^q, 
 \end{equation*}
 where $(s-d/p)q+d=(\gamma+d)(1-q/p)$. With the obvious modification for $p=\infty$, H\"older's inequality yields
 \begin{equation*} 
 \Vert f - S_P(f) \Vert_{L_q(\Omega)}^q 
 \ls\, \sum_{i=1}^N r_i^{(\gamma+d)(1-q/p)}|f|_{W^s_{p}(Q_i)}^q
 \,\le\, \bigg(\sum_{i=1}^N r_i^{\gamma+d}\bigg)^{1-q/p} 
 \bigg(\sum_{i=1}^N |f|_{W^s_{p}(Q_i)}^p \bigg)^{q/p}.
 \end{equation*} 
 Since the cubes $Q_i$ form an efficient covering,
 we can proceed as in \eqref{eq:norms-efficient} 
 to obtain
 that the second factor in the previous estimate is bounded by a constant. Since the balls $B_i$ of radius $d_i\asymp r_i$ 
 are pairwise disjoint subsets of $\Omega\setminus P$, 
 we get $\dist(\cdot,P)\ge d_i/2$ on $B_i/2$ and
\[
 \sum_{i=1}^{N} r_i^{\gamma+d}
 \,\ls\, \sum_{i=1}^{N} r_i^{\gamma} \vol(B_i)
 \,\ls\,\sum_{i=1}^{N} \int_{B_i/2} \dist(x,P)^{\gamma}\,{\rm d}y
 \,\le\, \int_{\Omega} \dist(x,P)^{\gamma}\,{\rm d}y,
\]
which completes the proof, noting that $(1-q/p)=qs/\gamma$.
\end{proof}

This shows that the algorithm from \eqref{eq:final-alg},
which only depends on $\Omega$, $P$, and $m$,
achieves the error bound from Theorem~\ref{thm:main-result-sobolev} for all $q<p$ and $s\le m$.  Recall that the optimal bound in the case $q\ge p$ is 
achieved by the moving least squares method, see Section~\ref{subsec:large-q}. 
However, it is a nice universality property of the algorithm \eqref{eq:final-alg}
that it is also optimal in the case $q\ge p$ and thus, for all values of $p$ and $q$ and $s\le m$.

\medskip

We briefly explain how the upper bound for $p\ge q$ may be obtained. 
Using the notation from the proof of the case $q<p$, it holds that $r_i=r_P(y_i)\lesssim h_{P,\Omega}$, for all $1\le i\le N$, which follows from \eqref{eq:goodradius} and \eqref{def:good-cube}. Further, Lemma~\ref{lem:localestimate-extended} gives 
\[
\sup_{y\in\Omega_i}\big|(f-S_Pf)(y)\big| \,\lesssim\, h_{P,\Omega}^{s-d/p}\,|f|_{W^s_p(Q_i)}.
\]
Replacing \eqref{eq:polyapp} with this, one can proceed as in the proof in the case $q\ge p$ in Section~\ref{subsec:large-q} to obtain
\[
 \Vert f - S_P f \Vert_{L_q(\Omega)} \,\lesssim\, h_{P,\Omega}^{s-d/p+d/q}\, |f|_{W^s_p(\Omega)}. 
\]

\medskip


We will now prove the lower bound of Theorem~\ref{thm:main-result-sobolev}\,(a) for $q< p$. 
Again we construct a fooling function
with large $L_q$-norm that vanishes on the point set.
However, unlike in the case $q\ge p$, the fooling function 
has to be supported in multiple holes. 

\begin{proof}
Let $\varphi\colon\mathbb{R}^d\to\mathbb{R}$  be a smooth non-negative function supported within $B(0,1)$ with $\varphi(0)=1$. We choose $y_i$, $r_i$, $Q_i$ and 
$B_i=B(z_i,d_i)$ for $1\le i \le N$ as in Proposition~\ref{prop:the-covering}. For every $ i\le N $ and $x\in\IR^d$ define 
\[
\varphi_i(x):=d_i^{s+\gamma/p}\varphi\big(d_i^{-1}(x-z_i)\big).
\]
With this, $\varphi_i$ is supported in $B(z_i,d_i)$. With a substitution one can see that
\[
\|\varphi_i\|_{L_q(\mathbb{R}^d)}^q=d_i^{\gamma+d}\|\varphi\|_{L_q(\mathbb{R}^d)}^q
\]
since $q(s+\gamma/p)=\gamma$.
With another substitution we obtain for $p<\infty$ that
\begin{equation}\label{eq:anormphii}
\|\varphi_i\|_{W^s_{p}(\mathbb{R}^d)}^p\ls d_i^{\gamma+d} \|\varphi\|_{W^s_{p}(\mathbb{R}^d)}^p.
\end{equation}
Let $f:=\sum_{i=1}^N \varphi_i$. 
Note that $f\vert_P=0$.
Since the $\varphi_i$'s have pairwise disjoint support, we have
\begin{equation}\label{eq:rnormfp}
\|f\|_{L_q(\Omega)}^q=\sum_{i=1}^N \|\varphi_i\|_{L_q(\mathbb{R}^d)}^q=\sum_{i=1}^N d_i^{\gamma+d}\|\varphi\|_{L_q(\mathbb{R}^d)}^q.
\end{equation}
For the same reason we obtain from \eqref{eq:anormphii} that
\begin{equation}\label{eq:anormfp}
\|f\|_{W^s_{p}(\Omega)}^p\asymp \sum_{i=1}^N \|\varphi_i\|_{W^s_{p}(\mathbb{R}^d)}^p \ls \sum_{i=1}^N d_i^{\gamma+d}\|\varphi\|_{W^s_{p}(\mathbb{R}^d)}^p.
\end{equation}
If $p=\infty$, we replace \eqref{eq:anormfp} by
\[
\|f\|_{W^s_{\infty}(\Omega)}\asymp \max_{1\le i\le N} \|\varphi_i\|_{W^s_{\infty}(\mathbb{R}^d)}\ls \|\varphi\|_{W^s_{\infty}(\mathbb{R}^d)}
\]
and carry on analogously. The estimates \eqref{eq:rnormfp} and \eqref{eq:anormfp} imply for the normalized function 
$ f_*:=\|f\|_{W^s_{p}(\Omega)}^{-1} f$ with $ \|f_*\|_{W^s_{p}(\Omega)}=1 $ that 
\[
\|f_*\|_{L_q(\Omega)}=\frac{\|f\|_{L_q(\Omega)}}{\|f\|_{W^s_{p}(\Omega)}}\gs \left(\sum_{i=1}^N d_i^{\gamma+d}\right)^{1/q-1/p}\frac{\|\varphi\|_{L_q(\mathbb{R}^d)}}{\|\varphi\|_{W^s_{p}(\mathbb{R}^d)}}.
\]
Since the cubes $Q_i$ cover $\Omega$ (and all contain a point of $P$), we get
\begin{equation}\label{eq:LB-discretisation}
 \sum_{i=1}^{N} d_i^{\gamma+d}
 \,\gs\, \sum_{i=1}^{N} d_i^{\gamma} \vol(Q_i)
 \,\gs\, \sum_{i=1}^{N} \int_{Q_i} \dist(y,P)^{\gamma}{\rm d}y
 \,\gs\, \int_{\Omega} \dist(y,P)^{\gamma}{\rm d}y
\end{equation}
and thus
\begin{equation}\label{eq:lower}
\|f_*\|_{L_q(\Omega)}\gs \left(\int_{\Omega} \dist(x,P)^{\gamma}{\rm d}x\right)^{s/\gamma},
\end{equation}
where the implied constant also depends on the choice of $ \varphi $. 
As any algorithm of the form \eqref{eq:sampling_operator} cannot distinguish $f_*$
from $-f_*$, we obtain the desired lower bound using \eqref{eq:fooling}.
\end{proof}

\subsection{Integration}
\label{subsec:int}

In this section we prove the result of Theorem~\ref{thm:main-result-sobolev}\,(b) for numerical integration and give an example of an (asymptotically) optimal algorithm. 

\begin{proof}
For the upper bound, we simply consider the linear algorithm
\[
S_P^*: W^s_p(\Omega)\to \mathbb{R},\quad f\mapsto S_P^*(f)=\int_{\Omega} S_P(f)(x)\,{\rm d}x
\]
with $S_P(f)\in L_q(\Omega)\subset L_1(\Omega)$ as in the previous sections and observe that
\[
 \Big|\int_\Omega f(x)~{\rm d} x - S_P^*(f)\Big| \le \|f-S_P(f)\|_{L_1(\Omega)}.
\]
In order to prove the lower bound, it suffices to note that the fooling functions $f_{\ast}$ in 
Section~\ref{subsec:large-q} and Section~\ref{subsec:small-q} are nonnegative
and thus $\int f_\ast:=\int_{\Omega} f_\ast(x){\rm d}x= \|f_{\ast}\|_{L_1(\Omega)}$.
Since any algorithm of the form \eqref{eq:sampling_operator_integration} 
satisfies $S_P(f_*)=S_P(-f_*)$, we get
\begin{equation*}
	\begin{split}
	 e\big(P,{\rm INT},W_p^s(\Omega)\big) 
	 \,&=\, \inf_{S_P} \sup_{\|f\|_{W^s_p(\Omega)}\le 1} \big\vert S_P(f) -  {\textstyle \int} f \big\vert\\
	 &\ge\, \inf_{u\in\IR}\, \max\left\{ \big\vert u - {\textstyle \int} f_* \big\vert, \big\vert u + {\textstyle \int} f_* \big\vert \right\}
	  \ge\, {\textstyle \int} f_\ast
\end{split}
\end{equation*}
and thus the same lower bound as in \eqref{eq:lower} for $q=1$.
\end{proof}

\subsection{Special algorithms in the Hilbert case}
\label{subsec:hilbert}

In particular, the bounds from Theorem \ref{thm:main-result-sobolev} are valid for the optimal algorithm using the given data, i.e., the algorithm attaining the infimum in \eqref{eq:wce-app} or \eqref{eq:wce-integration}. For $p=2$, $W^s_2(\Omega)$ is a reproducing kernel Hilbert space (RKHS) and this algorithm can be described explicitly via the kernel. We refer to Aronszajn \cite{A50} for a definition and properties of RKHSs.

\medskip

To any RKHS $(H,\langle \cdot,\cdot\rangle_H)$ on $\Omega$ there corresponds a symmetric and positive definite mapping, a kernel, $K\colon\Omega\times\Omega\to\mathbb{R}$ such that if $x\in\Omega$, we have $K(x,\cdot)\in H$ and the reproducing property $f(x)=\langle f,K(x,\cdot)\rangle_H$ holds for all $ f\in H$. A theorem due to Riesz yields that every continuous linear functional $\ell\colon H\to\mathbb{R}$ can be represented by a function $h_{\ell}\in H$. In fact, one has $ h_{\ell}(x)=\langle h_{\ell},K(x,\cdot)\rangle_H=\ell(K(x,\cdot))$ for every $x\in\Omega$. 
\medskip

To be more concrete, set $H=W^s_2(\Omega)$, and denote the corresponding kernel by $K_s$. Because of the embedding $W^s_2(\Omega)\hookrightarrow C(\Omega)$ the integration functional $\int\colon W^s_2(\Omega)\to \mathbb{R}$ is continuous, and, by the above, it is represented by the function $h(\cdot)=\int_{\Omega} K_s(x,\cdot)\,{\rm d}x\in W^s_2(\Omega)$. For a point set $P=\{x_1,\ldots,x_n\}\subset\Omega$ and a linear algorithm $S_P\colon f\mapsto \sum_{i=1}^n w_i f(x_i)$ with weights $w=(w_1,\ldots,w_n)\in\mathbb{R}^n$ a well-known expression for the worst-case error of $S_P$ on $W^s_2(\Omega)$ is available (see e.g.\  (9.31) in \cite{NW10}).
It reads
\begin{equation}\label{eq:errorformula}
\sup_{\|f\|_{W^s_2(\Omega)}\le 1}\Bigl|\int_{\Omega}f(x){\rm d}x -S_P(f)\Bigr|^2= \langle w,\mathbf{K}w\rangle-2\langle w,b\rangle+c,
\end{equation}
where the matrix $\mathbf{K}:=\bigl(K_s(x_i,x_j)\bigr)_{i,j=1}^n$ is positive definite, $b:=\bigl(\int_{\Omega}K_s(x,x_i){\rm d}x\bigr)_{i=1}^n$, and $c:=\int_{\Omega}\int_{\Omega} K_s(x,y){\rm d}x {\rm d}y$. Finding the optimal weights $w^{\ast}=(w_1^{\ast},\ldots, w_n^{\ast})$ and thus the optimal (linear) algorithm given $P$ is easy in this case. As a function of the weights, the expression \eqref{eq:errorformula} is convex and therefore optimal weights are given by the unique solution of $\mathbf{K}w^{\ast}=b$.
Therefore, the knowledge of the kernel $K_s$ permits a computation of (an approximation of) the optimal (linear) algorithm for numerical integration in Theorem~\ref{thm:main-result-sobolev}. 

\medskip

For $L_q$-approximation, $1\le q\le \infty$, consider the algorithm 
\[
f \,\mapsto\, \sum_{j=1}^{n}w^{\ast}_j K_s(\cdot,x_j),\quad f\in W^s_2(\Omega),
\]
where $w^{\ast}$ is the unique solution of $\mathbf{K}w=\big(f(x_i)\big)_{i=1}^n$. This is the interpolant with minimal $W^s_2(\Omega)$-norm and satifies the error bounds from Theorem~\ref{thm:main-result-sobolev} as it attains the infimum in \eqref{eq:wce-app}. This is known and follows for example from Theorems 13.1 and 13.5 in \cite{W04}. Note that this kind of algorithm is used in radial basis function interpolation, see, e.g. \cite{W04} and the references therein. 

\medskip

For obtaining the kernel $K_s$, we equip the Sobolev space $W^s_2(\Omega)$ with the equivalent norm $\|f\|^{\ast}_{W^s_2(\Omega)}:=\inf\big\{ \|g\|_{W^s_2(\mathbb{R}^d)}\colon g|_{\Omega}=f\big\}$,
 where the equivalence is due Lemma \ref{lem:extension-sobolev}.
Then, Theorem~5 in \cite{A50} yields that the reproducing kernel $K_s$ of $W^s_2(\Omega)$ is the restriction of the reproducing kernel $K_{d,s}$ of $W^s_2(\mathbb{R}^d)$ to $\Omega\times\Omega$, that is $K_s(x,y)=K_{d,s}(x,y)$ for $x,y\in\Omega$.  An explicit form of $K_{d,s}$ can be found in Theorem 1 of Novak, Ullrich, Wo\'zniakowski and Zhang \cite{NUWZ18}, which states that
\begin{equation*}
K_{d,s}(x,y)=\int_{\mathbb{R}^d} \frac{\exp\big(2\pi i\langle x-y,u\rangle\big)}{1+\sum_{0<|\alpha|\le s}\prod_{j=1}^d (2\pi u_j)^{2\alpha_j}}\,{\rm d}u,\quad \text{for }x,y\in\mathbb{R}^d.
\end{equation*}

\subsection{A first example}
\label{subsec:example}

We illustrate the advantage of the characterization of optimal points in Corollary~\ref{cor:characterization-sobolev}
compared to conditions involving the covering radius. To this end, consider $n$-point sets $P_n$ on a bounded convex domain $\Omega$. How large can the largest hole admist $P_n$ be for this to be still optimal as a sampling set for $L_q$-approximation of $W^s_p(\Omega)$-functions? 
\medskip

Assume that for each $n$ the ball $B_n:=B(y_n,r_n)$ with $y_n\in\Omega$ and $r_n>0$ does not contain any points of $P_n$ and that the points of $P_n$ cover $\Omega\setminus B_n$ nicely, i.e., 
the covering radius of $P_n$ in $\Omega\setminus B_n$ is of order $n^{-1/d}$. Then we have
\[
\int_{\Omega}\dist(x,P_n)^{\gamma}~{\rm d}x \,\ls\, n^{-\gamma/d}+r_n^{\gamma+d}\quad \text{for all }0<\gamma<\infty.
\]
Rearranging, this means that
\begin{equation}\label{eq:optimallargesthole}
r_n\ls n^{-1/d+1/(\gamma+d)}\quad \text{implies}\quad \|\dist(\cdot,P_n)\|_{L_{\gamma}(\Omega)}\ls n^{-1/d}.
\end{equation}
This condition is also necessary since $\dist(\cdot,P_n) \gs r_n$ on $\Omega \cap B(x_n,\frac{r_n}{2})$,
which is of volume $\asymp r_n^d$ due to the interior cone condition, and thus
\[
\int_{\Omega}\dist(x,P_n)^{\gamma}~{\rm d}x \,\gs\, r_n^{\gamma+d}.
\]
Letting $1\le q<p \le \infty$ and $\gamma$ as above,
we obtain from Corollary \ref{cor:characterization-sobolev} 
that $(P_n)$ is asymptotically optimal for $L_q$-approximation
on $W_p^s(\Omega)$ if and only if
\begin{equation}
\label{eq:condition-largest-hole}
r_n\ls n^{-1/d+1/(\gamma+d)}.
\end{equation}
This means that the radius of the largest hole is allowed to exceed the optimal covering radius $n^{-1/d}$
by the polynomial factor $n^{1/(\gamma+d)}$. 
We also refer to \cite[Theorem 1.2]{BDS+15}, where the necessity of condition~\eqref{eq:condition-largest-hole} has been observed for numerical integration on the sphere.

\section{The optimality of random points}
\label{sec:random}

In the following we give the proof of Proposition \ref{pro:randomcovering}. First, we derive a strong asymptotic result from a theorem due to Cohort \cite{Coh04}, which he obtained in the context of random quantizers. Related work may be found in \cite[Section 9]{GL00} and \cite{Yuk08}.

\begin{proposition}\label{pro:limittheorem}
Let $0<\gamma<\infty$ and $X_1,X_2,\ldots$ 
be independent and uniformly distributed on a bounded domain $\Omega\subset \mathbb{R}^d$ which satisfies an interior cone condition. Consider the random $n$-point set $P_n=\{X_1,\ldots, X_n\}$. Then we havefor all $0<p<\infty$ that 
\[
n^{\gamma/d}\frac{1}{\vol(\Omega)}\int_{\Omega}\dist(x,P_n)^{\gamma}{\rm d}x
\,\xrightarrow{\text{a.s.\,and in } L_p}\, 
\left(\frac{\vol(\Omega)}{\vol\bigl(B(0,1)\bigr)}\right)^{\gamma/d}\Gamma\Bigl(1+\frac{\gamma}{d}\Bigr).
\]
\end{proposition}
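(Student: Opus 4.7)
The plan is to obtain the almost sure convergence as a direct consequence of Cohort's limit theorem for random quantizers \cite{Coh04}, and then to promote it to $L_p$ convergence by establishing uniform integrability via bounds on every integer moment of the rescaled functional.

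First, I would apply Cohort's theorem to the uniform probability measure $\mu = \vol(\Omega)^{-1}\mathbbm{1}_\Omega\, dx$. In its general form, for i.i.d.\ samples $X_1,X_2,\ldots$ drawn from a probability measure $\mu$ with density $f$ on a compact set whose topological boundary is Lebesgue-negligible and on which $f$ is bounded and bounded away from zero, the theorem gives
\[
n^{\gamma/d}\int \dist(x,P_n)^\gamma \, d\mu(x) \xrightarrow{\text{a.s.}} \Gamma\Bigl(1+\tfrac{\gamma}{d}\Bigr)\, \vol\bigl(B(0,1)\bigr)^{-\gamma/d} \int f(x)^{1-\gamma/d}\, dx.
\]
Plugging in $f = \vol(\Omega)^{-1}\mathbbm{1}_\Omega$ yields $\int f^{1-\gamma/d} = \vol(\Omega)^{\gamma/d}$, while the left-hand side becomes $n^{\gamma/d}\vol(\Omega)^{-1}\int_\Omega \dist(x,P_n)^\gamma\, dx$, landing precisely on the claimed almost sure limit. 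The hypotheses are automatic: any bounded convex domain has a Lebesgue-negligible boundary, and the density is constant.

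To bootstrap to $L_p$ convergence for arbitrary $0<p<\infty$, I would show that $\sup_n \EE\, Y_n^k < \infty$ for every integer $k$, where $Y_n = n^{\gamma/d}\vol(\Omega)^{-1}\int_\Omega \dist(x,P_n)^\gamma\, dx$; combined with the a.s.\ limit, this implies uniform integrability of $\{Y_n^p\}_{n\ge 1}$ for any $p$ and hence the desired $L_p$ convergence. By Fubini and H\"older,
\[
\EE\Bigl(\int_\Omega \dist(x,P_n)^\gamma\, dx\Bigr)^k
\le \int_{\Omega^k} \prod_{i=1}^k \bigl(\EE\,\dist(x_i,P_n)^{k\gamma}\bigr)^{1/k}\, dx_1\cdots dx_k.
\]
For any $x\in\Omega$,
\[
\EE\,\dist(x,P_n)^{k\gamma} = k\gamma\int_0^{\diam(\Omega)} t^{k\gamma-1}\Bigl(1 - \tfrac{\vol(B(x,t)\cap\Omega)}{\vol(\Omega)}\Bigr)^n dt.
\]
The interior cone condition from Lemma~\ref{lem:convex-cone} guarantees $\vol(B(x,t)\cap\Omega) \gs t^d$ for $0<t\le r$, so the substitution $u = c\,t^d$ reduces the integrand to an incomplete Beta kernel and produces the bound $\EE\,\dist(x,P_n)^{k\gamma} \ls n^{-k\gamma/d}$, uniformly in $x\in\Omega$. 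Feeding this back gives $\EE\, Y_n^k \ls 1$ uniformly in $n$, as required.

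The main obstacle I anticipate is the precise bookkeeping of Cohort's statement: his paper is couched in the language of random quantization, and matching his normalization to our geometric formulation, together with verifying that the limiting constant is exactly $\Gamma(1+\gamma/d)(\vol(\Omega)/\vol(B(0,1)))^{\gamma/d}$ in the uniform case, requires some care. The moment-bound step is then essentially routine provided the interior cone condition is exploited uniformly over $\Omega$.
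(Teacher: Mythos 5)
Your proof is correct, and it follows a genuinely different route for the $L_p$ part. The paper invokes Cohort's Theorem~1 \emph{directly} to get $L_p$ convergence (verifying his assumptions T1.1, T1.2 via the cone condition and boundedness of the density) and separately invokes his Theorem~2 for the almost sure convergence; it then notes that Cohort's restriction $p\in\IN\setminus\{1\}$ is harmless because $L_p$ convergence implies $L_q$ convergence for $q<p$. You instead take only the almost sure convergence from Cohort and upgrade it yourself to $L_p$ convergence via Vitali's theorem, proving uniform integrability of $\{Y_n^p\}$ by the moment bound $\sup_n\EE\,Y_n^k<\infty$ for every integer $k>p$. Your moment computation is sound: the Fubini/generalized-H\"older reduction to $\EE\,\dist(x,P_n)^{k\gamma}$ is valid, and the interior cone condition does supply a uniform-in-$x$ lower bound $\vol(B(x,t)\cap\Omega)\gs t^d$ for $t\le r$, which after $(1-u)^n\le e^{-nu}$ and the substitution $u=t^d$ yields a Gamma integral of the claimed order $n^{-k\gamma/d}$; the tail $t>r$ decays geometrically and is negligible. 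This buys you a more self-contained argument that automatically handles all real $p>0$ and exposes explicitly where the cone condition enters. One small caveat: your paraphrase of Cohort's hypotheses (density bounded and bounded away from zero on a compact set with null boundary) is slightly looser than what his assumptions T2.1–T2.3 actually require; the paper's verification makes essential use of the interior cone condition to check the volume-ratio hypothesis $\vol(B(x,\varrho)\cap\Omega)\ge c_\Omega\vol(B(x,\varrho))$, which a null boundary alone would not guarantee. This does not affect your conclusion here since the cone condition is assumed, but the hypotheses are not ``automatic'' in the sense you suggest.
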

\begin{proof}
We apply Theorem~1 and Theorem~2 in \cite{Coh04} and 
let $\mu=\nu$ be the uniform distribution on $\Omega$, i.e., 
these measures have densities $f=g=\frac{1}{\vol(\Omega)}\bsone_{\Omega}$, with $\bsone_{\Omega}$ being the indicator function of $\Omega$.  We check the assumptions T1.1-2 and T2.1-3 of the aforementioned theorems from \cite{Coh04}.
As Cohort remarks in Section 2, assumption T2.1 is satisfied for bounded probability density functions. 
Clearly, T1.2 and T2.3 are satisfied as well.
For checking assumptions T1.1 and T2.2, we note that the interior cone condition yields a constant $c_{\Omega}>0$ such that for every $x\in\Omega$ and every $0<\varrho\le 1+\sup_{x\in\Omega}\|x\|_2$ we have $\vol\bigl(B(x,\varrho)\cap\Omega\bigr)\ge c_{\Omega} \vol\bigl(B(x,\varrho)\bigr)$. 
Since $\Omega$ is bounded, assumptions T1.1 and T2.2 are validated 
and Theorems 1 and 2 can be applied.
Clearly, since convergence in $L_p$ implies convergence in $L_q$ for all $q<p$, the condition $p\in\IN\setminus\{1\}$ in \cite[Theorem~1]{Coh04} is obsolete.
\end{proof}

Regarding the asymptotic constant, let us note that if $\Omega$ is a centrally symmetric convex body and $B(0,1)$ is the maximal volume ellipsoid inside $\Omega$, the quantity $\bigl(\vol(\Omega)/\vol\bigl(B(0,1)\bigr)\bigr)^{1/d}$ is known as the volume ratio of $\Omega$, which plays an important role in Banach space geometry, see e.g.\  Szarek and Tomczak-Jaegermann \cite{ST-J80}.  Further, as $\gamma\to\infty$ the quantity $\bigl(\Gamma(1+\frac{\gamma}{d})\bigr)^{1/\gamma}$ tends to infinity, which is compatible with the fact that the $L_{\infty}$-norm of $\dist(\cdot,P_n)$ is typically of larger order than $n^{-1/d}$.

\begin{proof}[Proof of Proposition \ref{pro:randomcovering}]
Let $0<\gamma<\infty$.  By Lemma \ref{lem:convex-cone} a bounded convex domain satisfies an interior cone condition.  We set $p=\alpha/\gamma$ and obtain from Proposition~\ref{pro:limittheorem} that $n^{\gamma/d} \cdot \|\dist(\cdot,P_n)\|_{L_{\gamma}(\Omega)}^{\gamma}$ converges in $L_p$ to a positive constant, which implies that also $n^{\alpha/d} \cdot \mathbb{E}\, \|\dist(\cdot,P_n)\|_{L_{\gamma}(\Omega)}^{\alpha}$ converges to a positive constant.  This completes the proof.

\medskip

The case $\gamma=\infty$ is well known. 
It is strongly connected to the so-called coupon collector's problem,
which asks for the number of coupons (points) that a collector has to draw
in order to obtain a complete collection (hit every set in a diameter-bounded equal volume partition of $\Omega$).
Using that $\Omega$ satisfies an interior cone condition,
the stated result e.g.\ follows from Theorems~2.1 and 2.2 
in Reznikov and Saff \cite{RS16} with $\Phi(r)=r^d$.
\end{proof}

\begin{remark}
Proposition \ref{pro:limittheorem} extends to probability distributions different from the uniform distribution. Under certain conditions, it also allows for the sampling distribution to differ from the distribution with respect to which the $L_{\gamma}$-norm of $\dist(\cdot,P_n)$ is computed. 
\end{remark}

Proposition~\ref{pro:randomcovering} and Theorem~\ref{thm:main-result-sobolev}
immediately yield Corollary~\ref{thm:intro}
on the quality of random sampling points in expectation.
By Markov's inequality, it is clear that the upper bounds in Corollary~\ref{thm:intro} 
also hold with high probability.
In addition, we also obtain an almost sure result on the quality of random sampling points.
\begin{corollary}\label{cor:as}
Let $X_1,X_2,\ldots$ be independent and uniformly distributed on a bounded convex domain $\Omega\subset \mathbb{R}^d$, $1\le q<p \le \infty$ and $s\in\mathbb{N}$ with $s>d/p$. 
Consider the random $n$-point sets $P_n=\{X_1,\ldots,X_n\}$ for $n\in\IN$. 
Then, the following holds almost surely:
\[
 e^{\rm lin}\big(P_n,W_p^s(\Omega)\hookrightarrow L_q(\Omega)\big) 
 \,\asymp\, n^{-s/d}. 
\]
\end{corollary}
\begin{proof}
The upper bound is an immediate consequence of Theorem~\ref{thm:main-result-sobolev} and Proposition~\ref{pro:limittheorem}.
The lower bound holds for arbitrary $n$-point sets.
\end{proof}

\section{Extensions to Triebel-Lizorkin spaces}
\label{sec:extensions}

In this section we discuss how to extend 
Theorem~\ref{thm:main-result-sobolev} to a wider range of function spaces.
To that end, we consider the Triebel-Lizorkin spaces $F^s_{p\tau}(\Omega)$ for $0<p<\infty$, $0<\tau\le \infty$ and $s>d/p$ 
as defined e.g.\  in \cite{NT06}. We refrain from reproducing the definition and simply note that this family covers a
variety of interesting spaces:
\begin{itemize}
\item For $\tau=2$ and $1<p<\infty$, we obtain the fractional Sobolev space (or Bessel potential space) $H^s_p(\Omega)$, see e.g.\  \cite{NT06}. If additionally $s\in\mathbb{N}$, we arrive at
the classical Sobolev spaces $W^s_p(\Omega)$ as defined in Section~\ref{sec:preliminaries}.
\item For $s\not\in\IN$, $1\le p<\infty$ and $\tau=p$, we obtain the Sobolev-Slobodeckij space $W^s_p(\Omega)$ of fractional smoothness, see e.g.\  \cite{DS93} and note that $ F^s_{pp}(\Omega) $ is the Besov space $B^s_{pp}(\Omega)$.
\end{itemize}
We also want to discuss the 
H\"older spaces $C^s(\Omega)$, 
which are not included in this scale.
For $s\in\IN$, the H\"older space $C^s(\Omega)$ is the space
of all $s$ times continuously differentiable functions with
\[
  \Vert f \Vert_{C^s(\Omega)} := \max_{|\alpha| \le s}\, \sup_{x\in\Omega}\, \vert D^{\alpha}f (x) \vert < \infty.
\]
For $ s\not\in\IN $, it is defined as the space of functions $f \in C^{\lfloor s\rfloor}(\Omega)$ with
\begin{equation}\label{semi-norm-hoelder}
 |f|_{C^s(\Omega)}:=\max_{|\alpha|=\lfloor s\rfloor}\,\sup_{x\ne y}\, \frac{|D^{\alpha}f(x)-D^{\alpha}f(y)|}{\|x-y\|^{\{s\}}} < \infty,
\end{equation}
where $ s=\lfloor s\rfloor + \{s\} $ with $ 0< \{s\}<1 $.
It is equipped with the norm $\|\cdot\|_{C^{\lfloor s\rfloor}(\Omega)}+|\cdot|_{C^s(\Omega)}$.

Note that in this article equality between function spaces is meant up to equivalence of norms and the equivalence constants vanish in the asymptotic notation. Further, all mentioned spaces are continuously embedded in $ C(\Omega) $.
We obtain the following extension of our results on $L_q$-approximation.

\begin{theorem}\label{thm:main-result-extended}
Theorem~\ref{thm:main-result-sobolev}, Corollary~\ref{cor:characterization-sobolev} and Corollary~\ref{thm:intro} 
remain valid if we admit arbitrary real parameters $0<p,q,\tau\le \infty$ and $s>d/p$ 
and replace $W_p^s(\Omega)$ either by $C^s(\Omega)$ for $p=\infty$ or 
by $F^s_{p\tau}(\Omega)$ for $p<\infty$.
\end{theorem}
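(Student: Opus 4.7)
The plan is to retrace the proof of Theorem~\ref{thm:main-result-sobolev} and isolate the three properties of the Sobolev space $W^s_p(\Omega)$ actually used, then verify that their analogues hold for $F^s_{p\tau}(\Omega)$ (when $p<\infty$) and for $C^s(\Omega)$ (when $p=\infty$). These are: (E)~a bounded linear extension operator into the corresponding space on $\IR^d$; (P)~a Bramble--Hilbert style local polynomial approximation $\sup_Q|f-\pi|\ls \varrho^{s-d/p}\,|f|_{F^s_{p\tau}(Q)}$ on $\ell^\infty$-balls of radius $\varrho\le 1$; and (S)~the dilation scaling $\|\varphi(\cdot/d)\|_{F^s_{p\tau}(\IR^d)}\asymp d^{d/p-s}$ for smooth, compactly supported $\varphi$ and small $d$. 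The rest of the proof is purely geometric---Wendland's quasi-interpolant, the efficient good-cube covering of Proposition~\ref{prop:the-covering}, and the disjoint empty balls inside it---and needs no change.

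For (E), I would invoke Rychkov's universal extension operator for Triebel--Lizorkin spaces on bounded Lipschitz domains; since bounded convex domains are Lipschitz by Lemma~\ref{lem:convex-lip}, this covers the $F^s_{p\tau}$ case, while Stein's extension handles $C^s(\Omega)$. For (P), I would rescale to a unit cube, apply the standard Bramble--Hilbert lemma in $F^s_{p\tau}$, and use the embedding $F^s_{p\tau}(Q)\hookrightarrow C(Q)$, which holds because $s>d/p$; for $C^s(\Omega)$, the Taylor remainder combined with the H\"older seminorm from \eqref{semi-norm-hoelder} gives (P) directly. For (S), the homogeneity of the $\dot F^s_{p\tau}$ seminorm under dilation is classical, and the inhomogeneous lower-order contributions are absorbed because $d\le 1$ and the bump is fixed.

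With these three tools, the upper bound arguments in Sections~\ref{subsec:large-q} and~\ref{subsec:small-q} transfer almost verbatim: in \eqref{eq:polyapp} and in Lemma~\ref{lem:localestimate-extended} one replaces every Sobolev seminorm by $|\cdot|_{F^s_{p\tau}}$, and the efficient-covering summation \eqref{eq:norms-efficient} is replaced by the analogous localisation inequality $\sum_i |f|_{F^s_{p\tau}(Q_i)}^p\ls |f|_{F^s_{p\tau}(\IR^d)}^p$ for finite-overlap covers by $\ell^\infty$-balls, which is a standard consequence of the atomic or wavelet characterisation. The lower bound carries over similarly: the bumps $\varphi_i=d_i^{s+\gamma/p}\varphi\bigl(d_i^{-1}(\cdot-z_i)\bigr)$ satisfy $\|\varphi_i\|_{F^s_{p\tau}(\IR^d)}^p\asymp d_i^{\gamma+d}$ by (S), the disjoint-support summation \eqref{eq:rnormfp}--\eqref{eq:anormfp} uses only that $F^s_{p\tau}$-quasi-norms are additive on functions with disjoint supports up to a constant, and the geometric step \eqref{eq:LB-discretisation} is unchanged. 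Corollary~\ref{cor:characterization-sobolev} and Corollary~\ref{thm:intro} then follow from the extended theorem exactly as in the Sobolev case, using Proposition~\ref{pro:randomcovering} in the random setting.

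The main obstacle is the quasi-Banach regime $\min\{p,\tau\}<1$, where the $F^s_{p\tau}$-quasi-norm is only $r$-subadditive with $r=\min\{1,p,\tau\}$. Every triangle-inequality step---notably the decomposition $f-S_P f=(f-\pi)-S_P(f-\pi)$ in \eqref{eq:polyapp} and the summation of localised seminorms over the good-cube cover---must be replaced by its $r$-version, and one has to check carefully that the final exponents produced by H\"older's inequality still match $1-q/p$ and $q/p$. This is bookkeeping rather than a conceptual difficulty, but it is the one place where the extension is not completely mechanical.
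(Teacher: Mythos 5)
Your upper bound strategy closely matches the paper's: it reduces to $\tau=\infty$ via the embedding $F^s_{p\tau_1}\hookrightarrow F^s_{p\tau_2}$, replaces the Sobolev seminorm by a difference-based seminorm $|\cdot|_{F^s_{p\infty}}$ built from the means $(d^{M,\Omega}_t f)$, and then observes the localisation inequality $\sum_i |f|_{F^s_{p\infty}(Q_i)}^p\ls |f|_{F^s_{p\infty}(\IR^d)}^p$ directly from the pointwise domination $d^{M,Q_i}_t f \le d^{M,\IR^d}_t f$ and the finite overlap of the cover, rather than invoking an atomic/wavelet characterisation; both routes are fine. Your ingredients (E), (P) and (S) are exactly the ones the paper assembles, with Rychkov's extension and the polynomial approximation from \cite{NT06} and \cite{DS80}.

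The real gap is in the lower bound for $q<p<\infty$. You assert that the disjoint-support summation \eqref{eq:rnormfp}--\eqref{eq:anormfp} ``uses only that $F^s_{p\tau}$-quasi-norms are additive on functions with disjoint supports up to a constant.'' That property is false in general for Triebel--Lizorkin spaces, and the paper explicitly flags it: one cannot establish $\bigl\|\sum_i\varphi_i\bigr\|_{F^s_{p\tau}(\IR^d)}^p\ls\sum_i\|\varphi_i\|_{F^s_{p\tau}(\IR^d)}^p$ for arbitrary disjointly supported bumps $\varphi_i$, because the Littlewood--Paley pieces $\Delta_j\varphi_i$ (or the difference means) are not disjointly supported even when the $\varphi_i$ are, and the $\ell_\tau$-structure inside the $L_p$-norm mixes scales. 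When all bumps have comparable radius, localisation arguments as in \cite{NT06} rescue the estimate, but here the $d_i$ are wildly inhomogeneous. The paper circumvents this by abandoning a direct norm computation and instead building the fooling function from $(s,p)_{K,-1}$-atoms $\psi_{\nu_i m_i}$ supported in dyadic cubes $Q_i^*\subset B_i$, so that Triebel's atomic decomposition theorem \cite[Theorem~13.8]{Tri11} bounds $\|f\|_{F^s_{p\tau}}^p$ by the sequence-space quantity $\|\lambda\|_{f_{p\tau}}^p$, which \emph{is} computable because only one atom is active at each point $x$. That atomic step is the missing idea in your proposal; without it the lower bound for $q<p<\infty$ does not close, and this is also the source of the paper's reluctance to treat Besov spaces, where the analogue of \eqref{eq:norms-efficient-fspace} is the problematic side.
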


For the proof, let us first note that
the analogues of Corollary~\ref{cor:characterization-sobolev} and Corollary~\ref{thm:intro}
immediately follow from the analogue of Theorem~\ref{thm:main-result-sobolev}\,(a) 
if we employ \eqref{eq:averagecoveringlower} and Proposition~\ref{pro:randomcovering}.
Moreover, the extension of Theorem~\ref{thm:main-result-sobolev}\,(a) to the case $C^s(\Omega)$ for $s\in\IN$ is already included in Section~\ref{sec:proof}.
Namely, the upper bound is immediate from the continuous embedding 
$C^s(\Omega) \hookrightarrow W_\infty^s(\Omega)$.
The lower bound holds since our fooling functions $f_\ast$ 
for $W_\infty^s(\Omega)$ are smooth and non-negative.
For the remaining cases, 
we follow the lines of the proof of Theorem~\ref{thm:main-result-sobolev}.
We only discuss the necessary changes.

\begin{proof}[Proof of the upper bounds]
It suffices to consider the case $\tau=\infty$,
since $ F^s_{p\tau_1}(\Omega)\hookrightarrow F^s_{p\tau_2}(\Omega) $ for $ \tau_1\le \tau_2 $.
We replace $| f |_{W_p^s(\Omega)}$ by the following semi-(quasi-)norms:
\begin{itemize}
\item For $p=\infty$ and $s\not\in\IN$, we use the semi-norm $|f|_{C^s(\Omega)}$ as defined in \eqref{semi-norm-hoelder}.
\item For $p<\infty$, we use a semi-(quasi-)norm which is defined via the means 
\[
(d_t^{M,\Omega}f)(x) :=
t^{-d}\int_{V^M_{\Omega}(x,t)}\bigl|(\Delta_{h,\Omega}^M f)(x)\bigr|\,\text{d} h,
\]
where $M:=\lfloor s+1 \rfloor$, $\Delta_{h,\Omega}^M$ 
is an $M^{\rm{th}}$-order difference operator 
restricted to $\Omega$ and $V^M_{\Omega}(x,t)$ is 
the set of directions $h\in\IR^d$ of length less than $t>0$ with $x+a h\in\Omega$ for all $0\le a \le M$.
That is, $(d_t^{M,\Omega}f)(x)$ is an averaged mean of the $M^{\rm{th}}$-order differences of $f$ around $x$.
Putting
\[
|f|_{F^s_{p\infty}(\Omega)} :=
 \bigg\| \sup_{0\le t \le 1}  \frac{(d_t^{M,\Omega}f)(\cdot)}{t^s} \bigg\|_{L_p(\Omega)},
\]
it is known that we then have the equality 
 $F^s_{p\infty}(\Omega)=\{f\in L_{\infty}(\Omega)\colon |f|_{F^s_{p\infty}(\Omega)}<\infty\}$ and
$
\|\cdot \|_{L_{\max\{p,1\}}(\Omega)}+|\cdot|_{F^s_{p\infty}(\Omega)}
$
is an equivalent quasi-norm, see Proposition 6 in \cite{NT06} and set $ u=1$ as well as $r=\infty $.
The same is true for $\Omega=\IR^d$. 
\end{itemize}
It is readily verified that these semi-(quasi-)norms have
the following scaling property.
If $T\colon\IR^d\to\IR^d$ is of the form $T(x)= \varrho ^{-1} x + x_0$ 
with $ \varrho \le 1$ and $x_0\in\IR^d$ 
and $f\in F^s_{p\infty}(\Omega)$ or $f\in C^s(\Omega)$, then $g=f\circ T^{-1}$ satisfies
\begin{equation}
\label{eq:scaling}
 |g|_{F^s_{p\infty}(T\Omega)} \le  \varrho ^{s-d/p} |f|_{F^s_{p\infty}(\Omega)},
 \qquad
 |g|_{C^s(T\Omega)} =  \varrho ^{s} |f|_{C^s(\Omega)},\quad  \text{respectively.}
\end{equation}
Lemma~\ref{lem:extension-sobolev} holds without changes 
for the spaces $F^s_{p\tau}(\Omega)$ and $C^s(\Omega)$, see Rychkov \cite{Rych99},
where we note that $C^s(\Omega)=B^s_{\infty\infty}(\Omega)$ for $s\not\in\IN$.
If we use \cite[Corollary~11]{NT06} (for $p<\infty$) 
and \cite[Theorem~6.1]{DS80} (for $p=\infty$) instead of \cite[Lemma~1.1.11]{M85}
and the scaling properties \eqref{eq:scaling},
we see that Lemma~\ref{lem:polyapp-sobolev} concerning polynomial approximation on cubes 
remains valid under the modifications of Theorem~\ref{thm:main-result-extended}.
Thus, also Lemma~\ref{lem:localestimate-extended} defies our modifications.
It only remains to note that the semi-norm $|\cdot |_{F^s_{p\infty}(\Omega)}$ behaves equally well with respect to an efficient covering. Analogous to \eqref{eq:norms-efficient}, we have
\begin{equation}\label{eq:norms-efficient-fspace}
\sum_{j=1}^N |f|^p_{F^s_{p\infty}(Q_j)}
= \int_{\mathbb{R}^d} \left(\sup_{0\le t \le 1}  \frac{(d_t^{M,Q_j}f)(x)}{t^s}\right)^p
\sum_{j=1}^N \mathbf{1}_{Q_j}(x)\,{\rm d}x
\le c_7 |f|^p_{F^s_{p\infty}(\mathbb{R}^d)}
\le c_8
\end{equation}
since $d_t^{M,Q_j}f(x)\le d_t^{M,\mathbb{R}^d}f(x)$ for every $x\in\mathbb{R}^d$.
With these preparations at hand, we may copy the proofs of Section~\ref{sec:proof}. 
\end{proof}

\begin{proof}[Proof of the lower bounds]
Also the lower bound for the cases $q\ge p$ and $q<p=\infty$ can be copied.
The scaling properties \eqref{eq:scaling-LB} for $F^s_{p\tau}$ may be obtained
from Proposition~2.3.1/1 in \cite{ET96} and the translation-invariance of the quasi-norm.
The lower bound in the case $q<p<\infty$ requires different ideas 
as we cannot establish $\left\|\sum_{i} \varphi_i\right\|_{F^s_{p\tau}(\mathbb{R}^d)}^p\ls \sum_i \|\varphi_i\|^p_{F^s_{p\tau}(\mathbb{R}^d)}$ for general disjointly supported $\varphi_i$,
as used in \eqref{eq:anormfp} for the Sobolev spaces. 
For optimal point sets, this problem does not exist as we may consider bumps $\varphi_i$ which are supported in balls of the same radius which enables the use of localization methods, see \cite{NT06}.
For general point sets, a remedy can be provided with the help of atomic decompositions of the spaces $F^s_{p\tau}(\Omega)$ as treated by Triebel~\cite{Tri11}. 
\medskip

Let $y_i$, $d_i$ and $B_i$ be as in Proposition~\ref{prop:the-covering}. Let $K>s$ be an integer and let $Q$ be the cube of sidelength $1$ which is centered at the origin. 
Let $\psi\in C^{\infty}(\IR^d)$ be non-negative with $\mathrm{supp}\, \psi\subset Q$
such that $\psi(0)>0$ and $\Vert D^\alpha \psi \Vert_\infty \le 1$ for all $\alpha\in\IN_0^d$ with $|\alpha|\le K$. The exact choice of the function $\psi$ is not important for our argument. According to Definition 13.3 in \cite{Tri11} the function 
\[
\psi_{\nu m}(\cdot)
=2^{-\nu(s-d/p)}\psi(2^{\nu}\cdot-m)\quad \text{where }\nu\in \IN_0, \ m\in \ZZ^d,
\]
is an $(s,p)_{K,-1}$-atom supported in $Q_{\nu m}$, the cube with center $2^{-\nu}m$ and sidelength $2^{-\nu}$. We choose $m_i\in \ZZ^d$ and $\nu_i\in \IN_0$ with $2^{-\nu_i}\asymp d_i$ 
such that the dyadic cube $Q_i^*:=Q_{\nu_i m_i}$ is contained in $B_i$. Set
\[
f:=\sum_{i=1}^{N}\lambda_i \psi_i,\quad \text{where }\lambda_i:=2^{-\nu_i \beta} \text{ with }\beta=(\gamma+d)/p.
\]
This function is supported in $\Omega$ and satisfies $f|_P=0$. Also, $\|f\|_{F^s_{p\tau}(\Omega)}=\|f\|_{F^s_{p\tau}(\IR^d)}$ and by \cite[Theorem~13.8]{Tri11},
\[
\|f\|_{F^s_{p\tau}(\IR^d)}^p
\lesssim \|\lambda\|_{f_{p\tau}}^p
=\int_{\IR^d}\Big(\sum_{i=1}^N |\lambda_i 2^{\nu_i d/p}\mathbf{1}_{Q_i^*}(x)|^{\tau}\Big)^{p/\tau}{\rm d} x,
\]
where the implicit constant does not depend on the balls $B_i$ and $f_{p\tau}$ is a quasi-normed space of sequences.
For every $x\in\IR^d$ only one summand is not equal to zero and since $\mathrm{vol}(Q_i^*)=2^{-\nu_i d}$,
\[
\|f\|_{F^s_{p\tau}(\IR^d)}^p
\lesssim \sum_{i=1}^N\int_{Q_i^*}(2^{-\nu_i})^{\beta p-d}\mathbf{1}_{Q_i^*}(x){\rm d} x
= \sum_{i=1}^N(2^{-\nu_i})^{\beta p}.
\]
As $\beta p=\gamma +d$ and $2^{-\nu_i}\asymp d_i$, we arrive at
\[
\|f\|_{F^s_{p\tau}(\Omega)}^p
\lesssim \sum_{i=1}^N d_i^{\gamma+d}.
\]
Using a substitution we have	
\[
\|f\|_{L_q(\Omega)}^q
=\sum_{i=1}^N\int_{Q_i^*}(2^{-\nu_i})^{\beta q}\psi_i(x)^q{\rm d} x
=\sum_{i=1}^N(2^{-\nu_i})^{\beta q+sq-d q/p+d}\int_{2Q}\psi(x)^q{\rm d} x.
\]
Since $\beta q+sq = \gamma +d q/p$ we have
\[
\|f\|_{L_q(\Omega)}^q
\asymp\sum_{i=1}^N d_i^{\gamma+d}.
\]
It follows that the normalized function $f_*=f/\|f\|_{F^s_{p\tau}(\Omega)}$	satisfies
\[
\|f_*\|_{L_q(\Omega)}
=\frac{\|f\|_{L_q(\Omega)}}{\|f\|_{F^s_{p\tau}(\Omega)}}
\gtrsim \Big(\sum_{i=1}^N d_i^{\gamma+d}\Big)^{1/q-1/p}
\gtrsim \Big(\int_{\Omega}\dist(x,P)^{\gamma}{\rm d} x \Big)^{s/\gamma},
\]
where the last inequality derives from~\eqref{eq:LB-discretisation}. Now the fooling function $f_\ast$ yields the lower bound analogous to~\eqref{eq:fooling}. 
We get the lower bound for the integration problem as the function $f_*$ is non-negative and thus $\|f\|_{L_1(\Omega)}=\int_{\Omega} f(x){\rm d} x$.
\end{proof}

Let us conclude with a few remarks on further extensions of our result.

\begin{remark}[More general domains, part~2]\label{rem:domains2}
 We only used that $\Omega$ is a bounded Lipschitz domain in order to extend functions on $\Omega$ to functions on $\IR^d$ without significant change of the norm. It is not hard to check that our results are valid for all bounded measurable sets $\Omega$ satisfying a strong local cone condition as in Lemma~\ref{lem:local-CC} if the function spaces on $\Omega$ are defined via restriction of the corresponding function spaces on $\IR^d$, i.e.,
 \[
 A(\Omega):=\{ f\vert_\Omega \colon f\in A(\IR^d)\},
 \qquad \Vert g \Vert_{A(\Omega)} := \inf_{f \in A(\IR^d)\colon f\vert_\Omega=g} \Vert f \Vert_{A(\IR^d)}
 \] 
 for $A\in \{W_p^s, F^s_{p\tau}, C^s\}$.
 By Lemma~\ref{lem:extension-sobolev} (and its analogues), these spaces coincide 
 with the spaces from above
 (in the sense of equivalent norms) if $\Omega$ is a bounded Lipschitz domain.
\end{remark}

\begin{remark}[Besov spaces]\label{rem:Besov}
It remains open whether Theorem \ref{thm:main-result-extended} carries over also to the scale of Besov spaces $ B^s_{p\tau}(\Omega), $ where $ 0<p,\tau\le \infty $ and $ s>d/p $. The upper bounds are complicated by finding an analogue of \eqref{eq:norms-efficient-fspace}. 
Novak and Triebel \cite{NT06} avoid this employing interpolation, which may also be used here 
in order to treat the case $  q\ge p $, but for $ q<p $ different methods may be required. 
For the lower bound the situation is similar to the $ F $-spaces. We leave the extension of Theorem \ref{thm:main-result-extended} to Besov spaces for future research.
\end{remark}

\begin{remark}[Functions with zero boundary condition]\label{rem:zero-boundary}
Since all our fooling functions are supported in the open set $\Omega$,
all our results hold without changes for the smaller spaces of functions
satisfying zero boundary conditions.
\end{remark}

\paragraph{Acknowledgement.} 
We thank Simon Hackl for bringing Besicovitch-type covering results to our attention, Erich Novak for fruitful discussions about polynomial reproduction
and both Winfried Sickel and Mario Ullrich for pointing us toward characterization of $ F $-spaces in terms of wavelets and atoms.  The authors were supported by the Austrian Science Fund (FWF) through projects F5513-N26 (DK, MS), which is part of the Special Research Program ``Quasi-Monte Carlo Methods: Theory and Applications'', and P32405 ``Asymptotic Geometric Analysis and Applications'' (MS).

\begin{small}
\bibliographystyle{plain}
\bibliography{raninfo}
\end{small}

\end{document}